\newcommand{\n}{\noindent}
\newcommand{\vp}{\varepsilon}
\newcommand{\bb}[1]{\mathbb{#1}}
\newcommand{\cl}[1]{\mathcal{#1}}
\newcommand{\ovl}{\overline}
\theoremstyle{plain}
\newtheorem{thm}{Theorem}[section]
\newtheorem{lem}[thm]{Lemma}
\newtheorem{pro}[thm]{Proposition}
\newtheorem{cor}[thm]{Corollary}
\theoremstyle{definition}
\newtheorem{dfn}[thm]{Definition}
\theoremstyle{remark}
\newtheorem{rem}[thm]{Remark}
\numberwithin{equation}{section}
\def\tilde{\widetilde}
\renewcommand{\tilde}{\widetilde}
\def\R{\bb R}
\def\C{\bb C}
\def\F{\bb F}
\def\P{\bb P}
\def\T{\bb T}
\def\d{\delta}
\def\N{\bb N}
\def\F{\bb F}
\def\P{\bb P}
\def\T{\bb T}
\def\nl{\nolimits}
\def\d{\delta}
\def\tilde{\widetilde}
\renewcommand{\tilde}{\widetilde}
\def\R{\bb R}
\def\Z{\bb Z}
\def\C{\bb C}
\def\N{\bb N}
\def\P{\bb P}
\def\T{\bb T}
\def\hat{\widehat}
\def\nl{\nolimits}
\begin{document}
\def\d{\delta}

  \def\tr{{\rm tr}}
 \def\y{\varphi}
\title{Completely Sidon sets in   $C^*$-algebras}

\author{by\\
 Gilles  Pisier\\
Texas A\&M University and UPMC-Paris VI}

 \maketitle
 \begin{abstract}  
 A sequence in a $C^*$-algebra $A$
  is called
 completely Sidon if its span in $A$ is completely isomorphic
 to the operator space version of the space $\ell_1$
(i.e. $\ell_1$ equipped with its maximal operator space structure).
The latter can also be described  as the span of the
free unitary generators in the (full) $C^*$-algebra of the free group
$\F_\infty$ with countably infinitely many generators. 
Our main result is a generalization
to this context of Drury's classical  theorem
stating that  Sidon sets are stable under finite unions.
In the particular case when $A=C^*(G)$ the (maximal) $C^*$-algebra of a discrete group $G$, we recover the non-commutative (operator space) version of Drury's
theorem that we recently proved.
We also give several non-commutative generalizations
of   our recent work on uniformly bounded orthonormal systems
to the case of von Neumann algebras equipped with normal
faithful tracial states.
 \end{abstract}  
 
 MSC Classif. 43A46, 46L06
 
 Recently,
 following the impulse of Bourgain and Lewko \cite{BoLe},
 we studied in \cite{Pi3}  the uniformly bounded orthonormal systems
 that span in $L_\infty$ a subspace isomorphic to $\ell_1$  
 by the basis to basis equivalence, and we called them Sidon sequences
 in analogy with the case of  characters on compact abelian groups.
One of the main results in  \cite{Pi3}  says that
if a uniformly bounded orthonormal system  $\{\psi_n\}$ in $L_2$ of a probability space
$(\Omega,\P)$ is the union $\{\psi^1_n\} \cup \{\psi^2_n\}$
of two Sidon sequences,
then the sequence $\{\psi_n\otimes \psi_n\otimes \psi_n\otimes \psi_n\}$
or simply $\{ \psi_n^{\otimes^4} \}$ is Sidon in $L_\infty(\Omega^4,\P^4)$.
Our goal in this paper is to generalize this result
to sequences in a non-commutative $C^*$-algebra.
The central ingredient of
our method in \cite{Pi3} is the spectral decomposition of the Ornstein-Uhlenbeck semigroup  for a Gaussian measure on $\R^n$.
Since this has all sorts of non-commutative analogues,
it is natural to try to extend the  results  of \cite{Pi3}
to non-commutative von Neumann or  $C^*$-algebras 
in place of $L_\infty$.
In \cite{Pi3} ``subgaussian" and ``randomly Sidon" sequences play an important role.
Although the non-commutative analogue
of a subgaussian system is not  clear  
(see however Remark \ref{Rf}),
and  that of ``randomly Sidon set" eludes us for the moment,
we are able 
in the present paper to extend several of the main results
of \cite{Pi3}, in particular
we recover an analogue of Drury's famous union theorem
for Sidon sets in groups. 
In the commutative case the fundamental example of
Sidon set is the set formed of the canonical generators
in the group $\Z_\infty$ formed of all the
finitely supported functions $f: \N \to \Z$. 
This is sometimes referred to as the free Abelian
group with countably  infinitely many generators.
The dual of the discrete group $\Z_\infty$
is the compact group $\T^\N$, and the von Neumann algebra
of $\Z_\infty$ can be identified with $L_\infty(\T^\N)$.
The analogue of this for our work
is the free group 
$\F_\infty$ with countably  infinitely many generators,
and its von Neumann algebra $M_{\F_\infty}$.
In the commutative case the generators of 
$\Z_\infty$ correspond in $L_\infty(\T^\N)$
to independent random variables uniformly distributed over $\T$.
In classical Sidon set theory, the associated Riesz product
plays a crucial role, because of its special interpolation property
derived from its spectral decomposition.
In more modern approaches,  these variables are replaced by
standard i.i.d.
gaussian random variables, and in \cite{Pi3}
the  ``spectral/interpolation property" of Riesz products
is replaced by the spectral expansion of the Ornstein-Uhlenbeck
semigroup $T_\vp$, i.e. the one that multiplies a multivariate Hermite function
of degree $d$ by $\vp^d$ (here $0\le \vp\le 1$). 
Equivalently this is obtained by second quantization
applied to $\vp \times \text{Identity}$ on the \emph{symmetric} Fock space.
In our new setting, the proper analogue
comes from Voiculescu's free probability theory (cf. \cite{VDN}),
where the analogues of gaussian variables are
operators on the \emph{full} Fock space.
 Not surprisingly, the techniques we use
 come from non-commutative probability, in connection with operator space theory
 for which we refer to \cite{P4}.
 
 By definition an operator space is a subspace $E\subset A$ of a $C^*$-algebra,
 and the operator space structure (o.s.s. in short) on $E$ consists of the sequence of norms
 induced  on $M_n(E)$ by $M_n(A)$, where $M_n(E)$ denotes the space
 of $n\times n$-matrices with entries in $E$, and  $M_n(A)$ is equipped
 with its natural norm as a $C^*$-algebra.

Let $(e_n)$ denote the canonical basis of the Banach space $\ell_1$
of absolutely summable complex sequences. 
The space $\ell_1$ is equipped with a special o.s.s.
called the maximal one. The latter operator space structure  is   induced 
   by the isometric embedding
    $\ell_1\subset C^*(\F_\infty)$ taking $e_n$ to the
   $n$-th
free unitary generator in the maximal $C^*$-algebra $C^*(\F_\infty)$ (see \cite[p. 183]{P4}).
See   \cite[\S 3]{P4} for more information and references.

We recall that the algebraic tensor product $A\otimes B$ of two $C^*$-algebras
can be equipped with a minimal and a maximal $C^*$-norm, which after completion
produce the $C^*$-algebras $A\otimes_{\min} B$ and $A\otimes_{\max} B$.
If either $A$ or $B$ is commutative (or nuclear)
then $A\otimes_{\min} B=A\otimes_{\max} B$ isometrically.

It is well known that the linear maps between  $C^*$-algebras that are compatible with
the minimal tensor products  are the completely bounded (c.b. in short) ones, see \cite{ER,Pa2,P4}.
We should emphasize
that  the analogous maps for the maximal tensor products are the decomposable ones
(see \eqref{e8} and \eqref{e8'} below) for which we extensively use Haagerup's results in \cite{Haa}. 
See   \cite[\S 11]{P4} or \cite{BO} for more background. 

The natural
non-commutative generalizations of the notions
in \cite{Pifz} are like this:
\begin{dfn}
A bounded sequence $(\psi_n)$ 
  in a $C^*$-algebra $A$
 is called completely Sidon if the
 mapping taking $e_n$ to $\psi_n$
 is a complete  isomorphism
   when  $\ell_1$ is equipped with its maximal o.s.s..
    \\
The sequence $(\psi_n)$ is called completely $\otimes_{\max}^k$-Sidon
 in $A$
 if the sequence $(\psi_n\otimes \cdots\otimes \psi_n)$  ($k$-times)
 is completely Sidon when viewed as sitting in $A \otimes_{\max} \cdots\otimes_{\max} A$ ($k$-times).\end{dfn}
 When $A$ is commutative, we simply say that
 $(\psi_n)$ is     $\otimes^k$-Sidon (and when $k=1$
 we just call it  Sidon), thus recovering the terminology in \cite{Pi3}.

 Let $({\cl M},\tau)$ be a non-commutative tracial probability space, i.e.
 a von Neumann algebra equipped with a faithful normal tracial state.
Toward the end of this paper we   reach our (already announced) goal: we show
 that
if 
$\{\psi_n\}\subset L_2({\cl M},\tau)$ 
is an orthonormal system that is uniformly bounded in ${\cl M}$
and is the union 
$\{\psi^1_n\} \cup \{\psi^2_n\}$
of two completely Sidon sequences
 then $\{\psi_n\}$ is completely $\otimes_{\max}^4$-Sidon.
 
 One difficulty is the apparent lack of
a suitable non-commutative analogue
of the notion of subgaussian sequence, that is crucially used in
  \cite{Pi3},
  as well as that of a sequence ``dominated" by   gaussians.
  We say that sequence $(x^1_n)$ in an $L_1$-space is dominated by another one
  $(x^2_n)$ if there is a bounded linear map $v: L_1 \to L_1$ taking $x^2_n$ to $x^1_n$ for all $n$.
  In the non-commutative case, we use the same definition but c.b. maps are not enough, we must consider decomposable maps
   $v: {\cl M}_* \to {\cl N}_*$ between non-commutative $L_1$-space,
   i.e. preduals of von Neumann algebras ${\cl M,\cl N}$. By $v$ decomposable we just mean
   that the adjoint $v^*:\cl N \to \cl M$ is decomposable as a linear combination of c.p. maps. 
   When ${\cl M,\cl  N}$ are commutative any bounded linear map between them is decomposable,
   but in general it is not so.
   
  A key point in the commutative setting of \cite{Pi3} is   that if 
  a uniformly bounded orthonormal sequence    $\{\psi_n\}$ in $A=L_\infty(\P)$
 is  Sidon with constant $C$,
  then  there is a 
   biorthogonal   sequence $(y_n)$ in $A^*$ that is dominated by an i.i.d. bounded sequence $(x_n)$ (and a fortiori dominated by i.i.d. gaussians).
   The proof is very simple:  the mapping $u: \text{span}\{\psi_n\} \to L_\infty(\T^\N)$
   taking $\psi_n$ to the $n$-coordinate $z_n$ on $\T^\N$
   has norm $\le C$, it extends to a mapping $\tilde u : A \to  L_\infty(\T^\N)$
   with the same norm, then if we set  $x_n=\bar z_n$ viewed as an element of 
   $L_1(\T^\N)$, $(y_n)$ defined by $y_n= (\tilde u)^* (x_n)$ does the job. 
   
A second point  in \cite{Pi3} is that if a  sequence in $L_1(\P)$  is dominated by  a sequence 
  such as $(x_n)$
then any bounded sequence in $L_\infty(\P)$ that is biorthogonal to it
must be $\otimes^2$-Sidon.

In the non-commutative setting, $L_\infty(\P)$ is replaced by $({\cl M},\tau)$  and
$L_\infty(\T^\N)$ by
$M=M_{\F_\infty}$
equipped with its usual trace $   \tau_{\F_\infty}$.  
The non-commutative version of 
 $(x_n)$ is  the sequence $(y_n)$
 formed of variables each one having the same distribution as $\bar z_n$ or $z_n$
(i.e. normalized Haar measure on $\T$)
but instead of stochastic independence
we require freeness.
More formally we  
take for $y_n$ the element of $ M _*$
associated to the $n$-th free generator of ${\F_\infty}$.
Equivalently   $(y_n)$ can be any  free Haar unitary sequence 
in the sense of \cite{VDN}. We could use just as well any free semicircular (also called ``free gaussian") sequence
in Voiculescu's sense. More generally
we call ``pseudo-free" (see Remark \ref{ps}) any sequence
that is equivalent in a suitable sense (see Definition \ref{dom}) to such a sequence $(y_n)$.  Surprisingly, in this setting there is no need to distinguish
between the gaussian and i.i.d. unimodular case, because
free gaussian variables, unlike the gaussian ones, are uniformy bounded.

 The  non-commutative analogue 
 of the preceding  two points can be described schematically like this:
 \begin{thm}\label{t0}
 Assume $(\psi_n)$   completely  Sidon
 in $A$. Then $(\psi_n)$ admits
  a biorthogonal sequence $(\varphi_n)$ in $A^*$
 that is dominated by $(y_n)$.
 Moreover,    any bounded sequence $(\psi'_n)$
 in $A$ that is biorthogonal  to
 a sequence  dominated by $(y_n)$ is
 completely $\otimes_{\max}^2$-Sidon
 in $A$.
 \end{thm}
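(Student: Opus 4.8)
The plan is to treat the two assertions separately: the first by transcribing the commutative construction recalled above into the language of decomposable maps, the second by combining the biorthogonality with the \emph{comultiplication} on $C^*(\F_\infty)$.

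For the first assertion, I would start from the observation that complete Sidonicity means $e_n\mapsto\psi_n$ is a complete isomorphism from $\mathrm{MAX}(\ell_1)=\ovl{\mathrm{span}}\{U_n\}\subset C^*(\F_\infty)$ onto $\ovl{\mathrm{span}}\{\psi_n\}$; its inverse is a c.b.\ map $\psi_n\mapsto y_n$. As the introduction stresses that the maximal o.s.s.\ is governed by decomposable rather than merely c.b.\ maps, I would upgrade this to a \emph{decomposable} map and, via Haagerup's extension theorem into the injective algebra $B(H)\supset M$, extend it to a decomposable $\tilde w\colon A\to B(H)$ still sending $\psi_n$ to $y_n$. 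Then set $\varphi_n:=\tilde w^{*}(\bar y_n)$, where $\bar y_n\in M_*\subset B(H)_*$ is the normal functional $x\mapsto\tau(x\bar y_n)$: biorthogonality is immediate from $\tau(y_m\bar y_n)=\delta_{nm}$, and $v:=\tilde w^{*}|_{M_*}\colon M_*\to A^{*}$ is decomposable (the bitranspose of a decomposable map is decomposable) and carries the pseudo-free sequence $(\bar y_n)$ to $(\varphi_n)$. This is the exact analogue of the commutative $y_n=(\tilde u)^{*}(\bar z_n)$, with $L_\infty(\T^\N)$ replaced by $B(H)$ and ``bounded extension'' replaced by ``decomposable extension.''

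For the second assertion, write $v\colon M_*\to A^{*}$ for the decomposable domination map with $v(y_n)=\varphi_n$ and put $w:=v^{*}\colon A\to M$, realizing the pseudo-free sequence as the free generators of $C^*(\F_\infty)$ with its canonical trace $\tau$. Biorthogonality reads $\tau(w(\psi'_m)\bar y_n)=\delta_{nm}$, i.e.\ $w(\psi'_m)=y_m+r_m$ with $r_m$ orthogonal in $L_2(\tau)$ to every $y_n$. The decisive structural input is the comultiplication: the representation $g\mapsto U_g\otimes U_g$ of $\F_\infty$ induces a unital $*$-homomorphism $\Delta\colon C^*(\F_\infty)\to C^*(\F_\infty)\otimes_{\max}C^*(\F_\infty)$, $U_n\mapsto U_n\otimes U_n$, and the trivial character $\epsilon$ gives $(\epsilon\otimes\mathrm{id})\Delta=\mathrm{id}$, so $\Delta$ is a complete isometry. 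Hence the \emph{diagonal} free system $(y_n\otimes y_n)$ is completely Sidon in $C^*(\F_\infty)\otimes_{\max}C^*(\F_\infty)$ with constant $1$; this is the free replacement for the classical fact that $(z_n\otimes z_n)$ is a Steinhaus (hence Sidon) system, and it is what lets us bypass the subgaussian/Riesz-product mechanism that is unavailable here. Feeding this into the biorthogonality, I would test $\sum_n a_n\otimes(\psi'_n\otimes\psi'_n)$ against the operator-weighted dual functionals built from $\varphi_n\otimes\varphi_n$; using $\langle\varphi_n\otimes\varphi_n,\psi'_m\otimes\psi'_m\rangle=\delta_{nm}$ reduces the required completely-Sidon lower estimate to a uniform complete bound on $\sum_n\theta_n\,\varphi_n\otimes\varphi_n$. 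Since $\varphi_n\otimes\varphi_n=(v\otimes v)(y_n\otimes y_n)$ and $w=v^{*}$ is decomposable, the map $w\otimes w$ is completely bounded on the \emph{maximal} tensor product with $\|w\otimes w\|_{\mathrm{dec}}\le\|w\|_{\mathrm{dec}}^{2}$, transferring the estimate to the one for $(y_n\otimes y_n)$; the remainder $r_n$ disappears automatically, since pairing it through $v\otimes v$ against the diagonal functionals produces the factor $\tau(r_m\bar y_n)=0$.

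The main obstacle I anticipate is not the algebra but making every step \emph{completely} bounded rather than merely bounded. The entire scheme collapses at the matrix level unless the domination is realized through a decomposable map, so that $w\otimes w$ (equivalently $v\otimes v$) is controlled on $\otimes_{\max}$; this is precisely why domination must be required via decomposable, not merely c.b., maps. Verifying that this decomposable tensorization interacts correctly with the comultiplication, and that the passage from the span of $(y_n\otimes y_n)$ back to $\mathrm{MAX}(\ell_1)$ is completely isomorphic, is where Haagerup's results on decomposable maps and the full-group-algebra structure have to be used with care.
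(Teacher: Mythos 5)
Your overall architecture (extend the Sidon inverse into an injective algebra, pull back the canonical biorthogonal functionals, then transfer the second assertion to $M$ through the adjoint $w=v^*$ of the domination map) follows the paper's scheme of Proposition \ref{p1} and Theorem \ref{t1}, but in both halves the step you treat as routine is precisely where the real work lies, and as written each step fails.

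In the first assertion, the breakdown is at ``$\bar y_n\in M_*\subset B(H)_*$, set $v:=\tilde w^*|_{M_*}$''. There is no such inclusion: a linear map $\iota\colon M_*\to B(H)_*$ with $\iota(f)|_M=f$ which is completely positive would have adjoint $\iota^*\colon B(H)\to M$ restricting to the identity on $M$, i.e.\ a conditional expectation of $B(H)$ onto the free group factor, which cannot exist since $M_{\F_\infty}$ is not injective. The individual vector-functional extensions of the $y_n$ do exist, but Definition \ref{dom} demands one globally defined decomposable map $M_*\to A^*$ hitting all the $\varphi_n$ simultaneously, and your sketch never produces it: extending \emph{into} $B(H)$ is the easy half, and you are then stuck in $B(H)$ with no decomposable way back down to $M$. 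This is exactly the hole Proposition \ref{p1} fills by composing the extension $\beta\colon A\to B(H)$ with a c.p.\ map $\pi_2\colon B(H)\to M$, whose existence is the genuinely non-trivial input (trace-preserving embedding of $M$ into an ultraproduct of matrix algebras, the trace-preserving conditional expectation onto $M$ inside that finite algebra, and injectivity of $\prod_k M_k$). Without a substitute for $\pi_2$, your $v$ is simply undefined.

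In the second assertion, your reduction $\Phi=\Psi\circ(w\otimes w)$ is sound and is the content of Theorem \ref{t1}; the gap is the core claim that the matrix-weighted diagonal functional $\Psi(a\otimes b)=\sum_n\theta_n\,\hat a(g_n)\hat b(g_n)$ is completely bounded on $M\otimes_{\max}M$ uniformly in the unitaries $\theta_n$. The comultiplication cannot deliver this: the complete isometry $\Delta$ controls norms of elements of the \emph{span} of $(U_n\otimes U_n)$ in $C^*(\F_\infty)\otimes_{\max}C^*(\F_\infty)$, whereas you need a functional defined on all of $M\otimes_{\max}M$, and by Remark \ref{r1} max-compatibility of such a functional amounts to decomposability of the associated map, not mere complete boundedness (Arveson--Wittstock extension produces only c.b.\ maps, which do not act on $\otimes_{\max}$). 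The exact projection $P_1$ onto the span of the generators is completely bounded on $M$, but no decomposability bound for it is available, and the entire mechanism of Theorem \ref{key4} exists to circumvent this: Haagerup's c.p.\ multipliers ${\theta}_\vp=\sum_\ell\vp^\ell P_\ell$ give the splitting $P_1=T_\vp+R_\vp$ with $\|T_\vp\|_{dec}\le 2/\vp$ and $\|R_\vp\|_{L_2\to L_2}\le\vp$, and the test functional is the tracial state $\gamma(a\otimes b)=\tau(ab)$ --- which is a state only on $M\otimes_{\max}M^{op}$, forcing the final transposition trick --- twisted by the $*$-homomorphism $\varphi_n\mapsto u_n\otimes\varphi_n$. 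Your remark that ``the remainder disappears automatically'' is the symptom of the problem: with exact biorthogonality in both legs the remainders do cancel, but only because you paired with the non-max-bounded exact projection; once $P_1$ is replaced by the decomposable approximant $T_\vp$, as it must be, the quadratic term $r_n\otimes r_n$ survives, is only $O(\vp C_1'C_2')$, and has to be absorbed by choosing $\vp\sim(C_1'C_2')^{-1}$, which is where the constant $32C_1'C_2'$ comes from. That quantitative absorption, together with the $M^{op}$-to-$M$ passage, is the actual content of the paper's proof and is absent from yours.
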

 See \S \ref{sf} for the proof.
This is particularly useful in the case
$A=C^*(G)$ ($G$  a discrete group)
when $\psi_n=U_G(t_n)$,  the $t_n$'s
being distinct elements of $G$, and $U_G$ being  the universal unitary representation of $G$.
We   say that the set $\Lambda=\{t_n\}$ is   completely Sidon  
in $G$ when $(U_G(t_n))$ is so in $A=C^*(G)$.
In this case 
completely $\otimes_{\max}^k$-Sidon
 in $A$ automatically implies 
completely  Sidon,
therefore ``completely  Sidon" is equivalent
to  having a biothogonal sequence dominated by $(x_n)$.
 
We deduce from this in Corollary \ref{coru} that the union of two 
completely  Sidon subsets of $G$ is completely  Sidon.
This reduces to show that the 
union $\Lambda=\{t_n\}$ of the two completely  Sidon sets
is such that,  in the group $G^2$, $\{(t_n,t_n)\}$ has a biothogonal sequence dominated by $(y_n)$. Indeed, the preceding Theorem
then tells us that $\{(t_n,t_n,t_n,t_n)\}$
is completely  Sidon in $G^4$ and this clearly is the same
as saying $\Lambda=\{t_n\}$ is completely  Sidon.
Note that while    the property ``dominated by $(x_n)$"
is preserved by the union of two sequences with it (see Remark \ref{R9}),
when dealing with   a disjoint union
$\Lambda_1\cup \Lambda_2\subset A$
we have to find a way around the following
 difficulty :    
the union
of a system biorthogonal to  $\Lambda_1$ with one
biorthogonal to  $\Lambda_2$ is not necessarily
biorthogonal to $\Lambda_1\cup \Lambda_2$. 
This explains why we pass to $G^2$. A similar difficulty arises
for a general $A$.
This point leads us to conclude that the union is  completely
$\otimes_{\max}^k$-Sidon only for $k=4$ when we would hope to find
$k=2$
(see
 the proof of Theorem
\ref{t2}).

The proof of Theorem \ref{t0} reduces to a special case
that we prove in Corollary \ref{key2}, namely the case 
when
$A=M$  
and $(\psi_n)=(y_n)$. 
This is analogous to the commutative result proved in \cite{Pi3}:
any  subgaussian sequence in $L_1(\P)$ (in particular  the above sequence $(\bar z_n)$ in $L_1(\T^\N)$) 
is such that
any bounded biorthogonal sequence in $L_\infty(\P)$
is $\otimes^2$-Sidon.
See Remark \ref{Rf} for a further discussion of possible generalization
of the ``subgaussian" property.

There is an extensive literature on Sidon sets
in commutative discrete groups
or in duals (dual objects)  of compact groups, see e.g. \cite{GH},
but   not much 
   seems to be available on  Sidon sets
 in non-abelian discrete groups or a fortiori in $C^*$-algebras.
 Bo\. zejko  and Picardello investigated several  closely connected notions of Sidon set,
 those that span $\ell_1$ isomorphically but only as a Banach space
 and not an operator space, see \cite{Boz1,Boz2,   Pic}. 
 Apparently no version of Drury's theorem is known for these notions in non-abelian groups.
We refer to Bo\. zejko  and Speicher's \cite{BSp} and also the recent work \cite{BGM}
for some results on completely positive functions on  Coxeter groups that may be related to our own.  See also  \cite{Boz4}. \\See  also \cite{Wang} for a study of Sidon sets in compact quantum groups.\\

 We refer to \cite{P4} for background on completely bounded (c.b. in short),
  completely positive (c.p. in short), and decomposable maps. 
  See also \cite{Haa}. Some of the connections 
  of the latter notions with  the harmonic analysis of the present paper are described
    in chapter 8 and \S 9.6 and \S 9.7 in \cite{P4}. 
 
 \section{Completely Sidon sets}
  Let $U_n$ be the unitary generators in $C^*(\F_\infty)$.\\
 Let $A$ be a $C^*$-algebra.
 Let $(\psi_n)$ be a bounded sequence in $A$.
  \begin{dfn} We say that $(\psi_n)$ is completely Sidon if
 there is $C$ such that
 for any matricial coefficients $(a_n)$
 $$\|\sum a_n \otimes U_n\|_{\min}\le C \|\sum a_n \otimes \psi_n\|_{\min}.$$
 Equivalently, the operator space spanned by $(\psi_n)$  in $A$
 is completely isomorphic to $\ell_1$ equipped with the maximal operator space structure.\\
 Fix an integer $k\ge 1$.
 We say that $(\psi_n)$ is completely $\otimes_{\max}^k$-Sidon
 in $A$
 if the sequence $(\psi_n\otimes \cdots\otimes \psi_n)$  ($k$-times)
 is completely Sidon in $A \otimes_{\max} \cdots\otimes_{\max} A$ ($k$-times).
 \end{dfn}
 \begin{rem} It is important to note that for $k>1$
  the notion of completely $\otimes_{\max}^k$-Sidon 
  is relative to the ambient $C^*$-algebra $A$.
  If $A$ is a $C^*$-subalgebra  of a  $C^*$-algebra $B$,
  and $(\psi_n)$ is completely $\otimes_{\max}^k$-Sidon
 in $A$, it does not follow in general that
 $(\psi_n)$ is completely $\otimes_{\max}^k$-Sidon
 in $B$. This does hold nevertheless if there is 
 a c.p. or decomposable projection from $B$ to $A$.
 It obviously holds without restriction if $k=1$,
 but for $k>1$ the precision $\otimes_{\max}^k$-Sidon
 ``in $A$" is important.
 However, when there is no risk of confusion
 we will omit ``in $A$".
\end{rem}

 \begin{pro}\label{R3} The following are equivalent:
 \item{(i)} The sequence $(\psi_n)$ is completely Sidon.
 \item{(ii)} There is $C$ such that for any ${K},m,k$,
  any   $(a_n)$ in $M_m$, and 
 any    $(u_n)$   in $U(M_k)$ we have
 $$\|\sum\nl_1^{K} a_n\otimes u_n\| \le C \|\sum\nl_1^{K} a_n\otimes \psi_n\|.$$
  \item{(ii)'} Same as (ii) but for   $(u_n)$   in the unit ball of $M_k$.
   \item{(ii)''} There is $C$ such that for any 
   $C^*$-algebras $B$ and $D$,  
  any   $(a_n)$ in $B$, and 
 any    $(u_n)$   in the unit ball of $D$ we have
 $$\|\sum\nl_1^{K} a_n\otimes u_n\|_{B\otimes_{\min} D} \le C \|\sum\nl_1^{K} a_n\otimes \psi_n\|_{B\otimes_{\min} A}.$$
 \item{(iii)} Same as (ii) but with ${K}$ even, say ${K}=2m$ and the $u_n$'s restricted
 to be such that $u_{m+j}=u_j^{-1}$ for $1\le j\le m$.
 \item{(iv)}  Same as (ii) but with  the $u_n$'s restricted
 to be selfadjoint unitaries.
 
 \end{pro}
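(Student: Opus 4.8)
The plan is to separate the trivial implications, each obtained by shrinking the family of admissible test elements, from the two genuinely substantive statements. The trivial ones are: (ii)$'\Rightarrow$(ii) because unitaries lie in the unit ball; (ii)$''\Rightarrow$(ii)$'$ by taking $B=M_m$ and $D=M_k$; and (ii)$\Rightarrow$(iii) and (ii)$\Rightarrow$(iv) because the tuples allowed in (iii) and (iv) are special unitary tuples. So it suffices to prove (i)$\Rightarrow$(ii)$''$, the reverse implication (ii)$'\Rightarrow$(i), the passage (ii)$\Rightarrow$(ii)$'$, and the two implications (iii)$\Rightarrow$(ii) and (iv)$\Rightarrow$(ii). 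First I would restate (i) as the assertion that the linear map $T$ defined by $T(\psi_n)=U_n$ is completely bounded with $\|T\|_{cb}\le C$ from $\mathrm{span}(\psi_n)\subset A$ into $C^*(\F_\infty)$: this is exactly the matricial inequality in (i), and by the basic theory of c.b.\ maps (see \cite{P4}) it automatically yields $\|\mathrm{id}_B\otimes T\|\le C$ on $B\otimes_{\min}\mathrm{span}(\psi_n)$ for \emph{every} $C^*$-algebra $B$, not merely for $B=M_m$.

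For (i)$\Rightarrow$(ii)$''$ I would exploit the universal property of $C^*(\F_\infty)$: any assignment of the free generators $U_n$ to unitaries extends to a unital $*$-homomorphism. Given contractions $(u_n)$ in the unit ball of a $C^*$-algebra $D$ (unitized if necessary), I would dilate each $u_n$ to a unitary $\tilde u_n=\begin{pmatrix} u_n & (1-u_nu_n^*)^{1/2}\\ (1-u_n^*u_n)^{1/2} & -u_n^*\end{pmatrix}\in M_2(D)$, so that $u_n$ is the $(1,1)$-corner of $\tilde u_n$. The universal property gives a $*$-homomorphism $\pi:C^*(\F_\infty)\to M_2(D)$ with $\pi(U_n)=\tilde u_n$; tensoring with $\mathrm{id}_B$ over the (injective) minimal tensor product produces a complete contraction, whence $\|\sum a_n\otimes\tilde u_n\|_{B\otimes_{\min}M_2(D)}\le\|\sum a_n\otimes U_n\|_{B\otimes_{\min}C^*(\F_\infty)}$. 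Combining with the all-$B$ form of (i) from the previous step and then compressing to the $(1,1)$-corner (a complete contraction $M_2(D)\to D$) yields (ii)$''$.

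For the converse (ii)$'\Rightarrow$(i) I would compute the left side of the Sidon inequality as a supremum over representations: since $M_m$ is nuclear, $\|\sum a_n\otimes U_n\|_{\min}$ equals the $C^*$-norm in $M_m(C^*(\F_\infty))$, i.e.\ the supremum of $\|\sum a_n\otimes u_n\|$ over all Hilbert spaces $H$ and all unitary tuples $(u_n)$ in $B(H)$ arising as images of the $U_n$. For a fixed tuple I would approximate this norm by compressing to finite-dimensional subspaces $H_0\subset H$: the compressions $P_{H_0}u_nP_{H_0}$ are contractions in $M_{\dim H_0}$, and the norms $\|\sum a_n\otimes P_{H_0}u_nP_{H_0}\|$ increase to $\|\sum a_n\otimes u_n\|$ as $H_0\uparrow H$. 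Applying (ii)$'$ to these finite-dimensional contractions and passing to the supremum gives $\|\sum a_n\otimes U_n\|_{\min}\le C\|\sum a_n\otimes\psi_n\|_{\min}$, that is (i). This closes the chain (i)$\Leftrightarrow$(ii)$'\Leftrightarrow$(ii)$''$.

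It remains to reincorporate (ii), (iii), (iv). For (ii)$\Rightarrow$(ii)$'$ I would again dilate: a contraction $u_n\in M_k$ dilates to a unitary $\tilde u_n\in M_{2k}$ with $u_n=V^*\tilde u_nV$ for the isometric inclusion $V:\C^k\to\C^{2k}$, so $\|\sum a_n\otimes u_n\|\le\|\sum a_n\otimes\tilde u_n\|\le C\|\sum a_n\otimes\psi_n\|$ by (ii); with the trivial (ii)$'\Rightarrow$(ii) this gives (ii)$\Leftrightarrow$(ii)$'$. For (iii)$\Rightarrow$(ii) I would simply pad: given unitaries $v_1,\dots,v_K$ I run (iii) with the doubled tuple $(v_1,\dots,v_K,v_1^{-1},\dots,v_K^{-1})$ and coefficients $(a_1,\dots,a_K,0,\dots,0)$, the vanishing coefficients on the inverse slots leaving both sides unchanged. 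For (iv)$\Rightarrow$(ii) I would replace each unitary $v_n\in U(M_k)$ by the selfadjoint unitary $s_n=\begin{pmatrix}0&v_n\\ v_n^*&0\end{pmatrix}\in U(M_{2k})$; since $\sum a_n\otimes s_n$ is the $2\times2$ operator matrix with off-diagonal blocks $\sum a_n\otimes v_n$ and $\sum a_n\otimes v_n^*$, one has $\|\sum a_n\otimes v_n\|\le\|\sum a_n\otimes s_n\|\le C\|\sum a_n\otimes\psi_n\|$ by (iv). The one genuinely non-formal point is the converse (ii)$'\Rightarrow$(i): the hypotheses test only finitely many finite-dimensional unitaries, whereas the Sidon constant is governed by the universal norm over all representations, and it is precisely the finite-dimensional compression argument that bridges this gap. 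Everything else is dilation-and-compression bookkeeping built on the universal property of $C^*(\F_\infty)$ and the functoriality of $\otimes_{\min}$ under c.b.\ maps.
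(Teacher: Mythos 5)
Your proof is correct, and it reaches the equivalences by a genuinely different route than the paper in two places, so a comparison is worth recording. The most interesting divergence is (iii)$\Rightarrow$(ii): your zero-padding trick (run (iii) on the doubled tuple $(v_1,\dots,v_K,v_1^{-1},\dots,v_K^{-1})$ with coefficients $(a_1,\dots,a_K,0,\dots,0)$) is simpler than the paper's argument and loses no constant. The paper instead tests (iii) on tuples of the form $(zu_j,\bar z u_j^{-1})$, integrates over $z\in\T$ to separate the two halves of the sum, and concludes with a factor-$2$ loss via the triangle inequality; the point of that longer route is that it establishes \eqref{e89}, a comparison of the constrained and unconstrained suprema \emph{for fixed coefficients} $a_1,\dots,a_{2m}$, and it is precisely \eqref{e89} that is reused later in the proof of Theorem \ref{key4}, where the coefficients are given and cannot be padded with zeros. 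So your shortcut proves the Proposition (with a better constant) but could not replace \eqref{e89} in that later application. Second, you correctly single out (ii)$'\Rightarrow$(i) as the only non-formal implication and bridge it by compressing a unitary tuple on an arbitrary Hilbert space to finite-dimensional subspaces, exploiting that (ii)$'$ tolerates contractions; this is a clean way to handle a point the paper passes over quickly ((i)$\Leftrightarrow$(ii) is called ``an explicit reformulation''), and it avoids invoking residual finite-dimensionality of $C^*(\F_\infty)$, which is the standard alternative for recovering the universal norm from finite-dimensional unitary tuples. For the remaining steps you use dilations (the Halmos unitary dilation in $M_2(D)$ together with the universal property of $C^*(\F_\infty)$ and compression to a corner) where the paper uses extreme-point and Russo--Dye arguments to pass between unitaries and the unit ball; both are standard and equally valid, and your (iv)$\Rightarrow$(ii) is exactly the paper's $2\times 2$ selfadjoint-unitary trick.
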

 \begin{proof}[Sketch]
 The equivalence (i) $\Leftrightarrow$ (ii) is just an  explicit reformulation of the preceding definition.
 To justify (iii) $\Rightarrow$ (ii) we can use $(z u_j, \bar z u_{m+j}^{-1})_{1\le j\le m}$. Then after integrating in $z\in \T$, we can separate the two 
 parts of the sum appearing in (ii). 
 This gives us for the sup over all the $u_n$'s as in (iii)
  $$ {\sup\nl_{(iii)}\|\sum\nl_1^{2m} a_n\otimes u_n\|} \ge 
  \max\{ \|\sum\nl_1^{m} a_n\otimes u_n\|,  \|\sum\nl_{m+1}^{2m} a_n\otimes u_n\| \} .$$
and hence (triangle inequality)
   \begin{equation}\label{e89} {\sup\nl_{(iii)}\|\sum\nl_1^{2m} a_n\otimes u_n\|} \ge 
 (1/2) \sup\nl_{(ii)}\|\sum\nl_1^{2m} a_n\otimes u_n\| ,\end{equation}
 where  the last sup runs over all $(u_n)$ as in (ii).
 We then deduce
 (ii) from (iii) possibly with a different constant.\\
   To justify (iv) $\Rightarrow$ (ii) we can use
   a $2\times 2$-matrix trick: if $(u_n)$ is an arbitrary
   sequence  in $U(k)$, $(\begin{matrix}0 &u_n\\  u_n^* &0  \end{matrix})$
 are selfadjoint in $U(2k)$.  We then deduce
 (ii) from (iv)  with  the same constant.\\
 Lastly the equivalence  (ii) $\Leftrightarrow$ (ii)' 
 is obvious by an extreme point argument,
 and (ii)' $\Leftrightarrow$ (ii)'' 
 (which reduces to $B=B(H)$ and hence to the matricial case)
 follows by  Russo-Dye  and standard operator space arguments
 (see \cite[p. 155-156]{P4}  for more background). 
 \end{proof}
 \begin{rem} For simplicity we state our results
 for sequences indexed by $\N$, but actually they hold with obvious adaptation of the proofs  
 for families indexed by an arbitrary set, finite or not, with bounds independent
 of the number of elements.
 \end{rem}
\n{\bf Examples :} \\
(i) The fundamental example of a completely Sidon set (with $C=1$) is of course
any free subset in a group. 
More precisely, if we add the unit to a free set, the resulting augmented set
is still completely Sidon with $C=1$.
Moreover, any left or right translate of a
completely Sidon set is completely Sidon (with the same $C$).
Therefore any left translate of a free set augmented by the unit
is completely Sidon with $C=1$.
The converse also holds and  is easy to prove, see \cite{Pifz}.\\
(ii) It is proved in \cite[Th. 8.2 p.150]{P4} that for any $G$
the diagonal mapping $t\mapsto \lambda_G(t) \otimes \lambda_G(t) $
defines an isometric embedding of $C^*(G)$ into
 $C_\lambda^*(G)\otimes_{\max} C_\lambda^*(G)$.
 It follows that a subset $\Lambda \subset G$
 is completely Sidon iff the set
 $\{\lambda_G(t)   \mid t\in   \Lambda\}$  is completely $\otimes_{\max}^2$-Sidon   in $C_\lambda^*(G)$.
 Let $M_G$ be the von Neumann algebra of $G$ (i.e. the one generated by
 $   \lambda_G$). 
 Similar arguments show that the same diagonal embedding 
 embeds  $C ^*(G)$ also into $M_G\otimes_{\max} M_G$.
In particular  the set of free generators
is a completely $\otimes_{\max}^2$-Sidon set in $C_\lambda^*(\F_\infty)$
(and also in   $M_{\F_\infty}$).

 \medskip
 
 We will be interested in another property, namely the following one:
 
 Let $X_1,X_2$ be  preduals of  $C^*$-algebras (so-called non-commutative $L_1$-spaces).
 
 We say that a bounded linear map $v: X_2 \to X_1$ is 
 completely positive (in short c.p.)  if $v^*: X_1^* \to X_2^*$ is c.p..\\

   Let $A,B$ be $C^*$-algebras. 
   Let  $CP(A,B)$ be the set of c.p. maps from $A$ to $B$.
   We say that a bounded linear map $u:A\to B$
   is decomposable if 
there are $u_j \in CP(A,B)\quad(j=1,2,3,4)$ such that
$$u=u_1 -u_2 +i(u_3 -u_4 ).$$
 We use the dec-norm as defined by Haagerup \cite{Haa}.
 We denote
 \begin{equation}\label{d11}\| u\|_{dec}=\inf\{\max\{\| S_1\|,\| S_2\|\}\}
 \end{equation}
  where the infimum runs over all maps $S_1 ,S_2\in CP(A,B)$ such that the map
 \begin{equation}\label{d12} V: x\to \left(
\begin{matrix}
 S_1 (x) & u(x)\\
 u(x^* )^* & S_2 (x)
\end{matrix}
\right) \end{equation}
is in $CP(A,M_2 (B))$.  
   
 A  mapping $v: X_2 \to X_1$ is said to be decomposable 
 if its adjoint $v^*: X_1^* \to X_2^*$ is decomposable in the preceding sense
 (linear combination of c.p. maps),
 and we set by convention
 $$\|v\|_{dec} =\|v^*\|_{dec}.$$
We use the term $c$-decomposable
 for maps that are decomposable with dec-norm $\le c$.

 The crucial property of a decomposable map
 $v: A\to B$ between
 $C^*$-algebras is that for any other  $C^*$-algebra
 $C$ the mapping $id_C\otimes v$ extends to a bounded (actually decomposable) map from $C\otimes_{\max} A$
 to 
 $C\otimes_{\max} B$. Moreover we have
 $$  \| id_C\otimes v:   C\otimes_{\max} A\to C\otimes_{\max} B \|
 \le \|v\|_{dec}.
 $$
 Consequently, for any pair 
 $v_j: A_j\to B_j$ ($j=1,2$)
 of decomposable maps 
 between
 $C^*$-algebras, we have
  \begin{equation}\label{e8}
  \| v_1\otimes v_2:   A_1\otimes_{\max} A_2\to B_1\otimes_{\max} B_2\|_{cb} \le
  \| v_1\otimes v_2:   A_1\otimes_{\max} A_2\to B_1\otimes_{\max} B_2\|_{dec}  \end{equation}
  \begin{equation}\label{e8'}
\le \|v_1\|_{dec}\|v_2\|_{dec}.
 \end{equation}
 
 \begin{dfn}\label{dom} Let $I,J$ be any sets.
  (i) Let $(x_n^1)_{n\in I } $ (resp.    $(x_n^2)_{n\in I } $ )
  be a family in $X_1$  (resp. $X_2$).
  Let us say that $(x_n^1)_{n\in I } $ is $c$-dominated
  (or ``decomposably $c$-dominated")
  by $(x_n^2)_{n\in I} $ if there is   a decomposable mapping 
  $v: X_2 \to X_1$ 
  with $\|v\|_{dec}\le c$ such that $v(  x_n^2 ) = x_{n}^1 $
  for any $n\in I$.\\
   We simply say ``dominated" for  $c$-dominated for some $c$.\\
  (ii) We say that $(x_n^1)_{n\in I } $ and  $(x_n^2)_{n\in J } $
  are `` decomposably equivalent "  
  if there is a bijection $f: I \to J$ such that
    each of the families
    $(x_n^1)_{n\in I } $ and $(x_{f(n)}^2)_{n\in I} $  is dominated by the other.
\end{dfn}
Let $Y$ be the predual of a $C^*$-algebra.
 The positive cone in $M_k(Y)$ is the polar of the 
 positive cone $M_k(Y^*)_+$  in the $C^*$-algebra $M_k(Y^*)$.
 More precisely
 $y\in M_k(Y)_+$
 iff  
 $$\forall a\in M_k(Y^*)_+\quad \sum\nl_{ij}  a_{ij} (y_{ij} )\ge 0. $$
 
 Clearly $v: X_2 \to X_1$ is c.p.
 iff for any $k$ the mapping $id_{M_k} \otimes v: M_k(X_2)
 \to M_k(X_1)$ is positivity preserving.

 More generally,   since we have positive cones  on 
 both $M_k(X^*)$ and   $M_k(Y)$,
 we can extend the definition of complete positivity
 to maps from a $C^*$-algebra to $Y$
 or from $Y$ to a $C^*$-algebra. In particular,
  a map $T: X^*\to Y$
is called c.p. if  
$id_{M_k} \otimes T: M_k(X^*)\to M_k(Y)$ is positivity preserving
for any $k$.
\begin{rem}\label{rop}[Opposite von Neumann algebra]
 The opposite von Neumann ${M^{op}}$ is the same linear space
as $M$ but with the reverse product.
Let $\Phi: {M^{op}} \to M$ be the identity map,
viewed as acting from ${M^{op}}$ to $ M$,
so that  $\Phi^*: M^* \to {M^{op}}^*$ also acts as the identity.\\
  When $M$ is a von Neumann algebra
equipped with a normal faithful tracial state $\tau$,
there is a minor problem that needs clarification.
We have a natural inclusion $J: M\to M^*$ denoted by $y\mapsto J y$ and
defined by $J{y}(x)=\tau(yx)$.
In general this is not c.p, but
it is c.p. when viewed as a mapping
either from $M^{op}\to M^*$ or from $M\to {M^{op}}^*$.
Indeed,  for all $x,y\in M_k(M)_+$ 
we have $[\tr\otimes\tau] (xy)=\sum_{ij}  \tau ( x_{ij} y_{ji}) \ge 0$
but in general it is \emph{not  true} that for $\sum_{ij}  \tau ( x_{ij} y_{ij})=
\sum_{ij}  J{y_{ij}} ( x_{ij} )  $.\\
 Then the content of the preceding observation is that
$\Phi^* J: M \to {M^{op}}^*$ is c.p. (but $J$ in general fails this). 
  \end{rem}

    \begin{rem}\label{R6}[About preduals of finite vN algebras]
 Let  $(M^1,\tau^1)$ be here any noncommutative probability space,
 i.e. a von Neumann algebra equipped with
 a normal faithful tracial state.
 The predual ${M^1}_*$  is the subset of ${M^1}^*$ formed of the
 weak* continuous functionals  on $M^1$. It can be
isometrically identified with the space
 $L_1(\tau_1)$ defined as the completion of $M_1$ for the 
 norm $\|x\|_1=\tau_1(|x|)$. Thus 
 we have a natural inclusion
 with dense range  $M^1\subset L_1(\tau_1)$.
 We need to observe
 the following fact. Let $({M}^2, \tau^2)$
 be another noncommutative probability space.
 Let $V: L_1(\tau_1)\to L_1(\tau_2)$
 be a linear map that is a $*$-homomorphism
 from $M^1$ to $M^2$
 when restricted to $M^1$.
 Then $V$ is 
 completely positive and hence 1-decomposable.
 \end{rem}
  \section{Analysis of the free group case}\label{sf0}

 We denote by $M$ the von Neumann algebra of the free group
 $\F_{\infty}$  equipped
 with its usual trace $\tau$.
 
We denote by $(\y_n)_{ n\ge 1}$   the elements of $M=\lambda_{\F_{\infty}} (\F_{\infty})''$
corresponding
 to the   free generators $(g_n)$ in $\F_{\infty}$,
 i.e. $\y_n=\lambda_{\F_{\infty}}(g_n)$.
 
 For convenience we set  
 $$\forall n\ge 1\quad  \y_{-n}=\y_n^{-1}.$$  
Although this is a bit pedantic, it is wise
to distinguish the elements of $M$ from the linear functionals 
on $M$ that they determine. Thus we let 
 $(y_n)_{n\in \Z_*}$ be the sequence in $M_*\subset M^*$
 that is biorthogonal to the sequence $({\varphi}_n)_{n\in \Z_*}$, and defined
 for all $n\in \Z_*=\Z\setminus \{0\}$ by
 \begin{equation}\label{e11}\forall a\in M\quad   {y_n}(a)=\tau (\varphi_n^* a).\end{equation}
 We also define $y_n^*\in M_*\subset M^*$ as follows
  \begin{equation}\label{e12}\forall a\in M\quad   {y^*_n}(a)=\tau (\varphi_n a).\end{equation}
Again let $J: M \to M^*$ be the inclusion
 mapping defined by 
 $Ja(b)=\tau(ab)$. With this notation
 $$y_n=J(\varphi_n^*) \text{  and  } y_n^*=J(\varphi_n) .$$
 
 For future reference, we record here a simple observation:
  \begin{lem}\label{dom1}
  Recall $\Z_*=\Z\setminus \{0\}$. The families 
  $(y_n)_{ n\ge 1}$ and $(y_n)_{ n\in \Z_*}$
  are  decomposably equivalent  in the sense of Definition \ref{dom}.
\end{lem}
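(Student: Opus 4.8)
The plan is to work throughout inside the predual $M_*$, identified with $L_1(\tau)$ as in Remark \ref{R6}; under this identification $y_n$ becomes the element $\varphi_n^*=\lambda_{\F_\infty}(g_n^{-1})$, so that $(y_n)_{n\in\Z_*}$ is simply $(y_n)_{n\ge 1}$ together with its adjoints (recall $y_{-n}=y_n^*$, i.e. $y_{-n}\leftrightarrow\varphi_n$). I fix once and for all the bijection $f\colon\N\to\Z_*$ given by $f(2k-1)=k$, $f(2k)=-k$, and I must produce two decomposable maps on $M_*$: a map $w$ with $w(y_{2k-1})=y_k$, $w(y_{2k})=y_{-k}$, and a map $v$ with $v(y_k)=y_{2k-1}$, $v(y_{-k})=y_{2k}$. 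Every building block will be $1$-decomposable by Remark \ref{R6} (preduals $\hat\pi$ of trace-preserving normal $*$-homomorphisms, preduals $\hat E$ of trace-preserving conditional expectations), together with the observation that multiplications by matrix units are elementary, hence decomposable, maps.

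The map $w$ I would build by splitting the source using freeness. Let $A_1,A_2$ be the free subalgebras generated by the odd, resp. even, generators, with trace-preserving conditional expectations $E_1,E_2\colon M\to M$; their preduals satisfy $\hat E_1(y_{2k-1})=y_{2k-1}$, $\hat E_1(y_{2k})=0$, and symmetrically. Composing with the injective trace-preserving $*$-homomorphisms $\pi_1\colon A_1\to M$, $\varphi_{2k-1}\mapsto\varphi_k$, and $\pi_2\colon A_2\to M$, $\varphi_{2k}\mapsto\varphi_k^{-1}$ (each sends a free family to a free family, hence is injective), the completely positive map $w:=\hat\pi_1\hat E_1+\hat\pi_2\hat E_2$ satisfies $w(y_{2k-1})=y_k$ and $w(y_{2k})=y_{-k}$, as wanted.

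The map $v$ is the heart of the matter, and here is the obstacle: $\varphi_k$ and $\varphi_k^{-1}$ generate the \emph{same} commutative subalgebra $\cong L_\infty(\T)$, so no conditional expectation and no $*$-homomorphism can route $y_k$ and $y_{-k}=y_k^*$ to two unrelated targets — morally this is exactly the unboundedness of the Riesz projection on $L_1(\T)$, and it kills any naive imitation of the construction of $w$. I would circumvent it by a $2\times 2$ matrix dilation that ``unfolds'' each Haar unitary into two free ones living in different matrix slots. Put $U_k=\left(\begin{smallmatrix}0&\varphi_{2k-1}\\ \varphi_{2k}&0\end{smallmatrix}\right)\in M\bar\otimes M_2$. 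Since the words occurring are reduced and nontrivial, each $U_k$ is a Haar unitary and the $U_k$ are free, so $\varphi_k\mapsto U_k$ extends to an injective trace-preserving normal $*$-homomorphism $\Pi\colon M\to M\bar\otimes M_2$, whose predual $\hat\Pi$ is completely positive by Remark \ref{R6}. The gain is that $\hat\Pi(y_{-k})=U_k$ and $\hat\Pi(y_k)=U_k^*$ now place the four elements $\varphi_{2k-1}^{\pm1},\varphi_{2k}^{\pm1}$ into separate, freely independent entries.

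It remains to post-compose $\hat\Pi$ with a decomposable $\Theta\colon L_1(M\bar\otimes M_2)\to L_1(M)$ that reads off the off-diagonal entries and re-routes them. Reading $U_k^*=\left(\begin{smallmatrix}0&y_{2k}\\ y_{2k-1}&0\end{smallmatrix}\right)$ and $U_k=\left(\begin{smallmatrix}0&y_{-(2k-1)}\\ y_{-2k}&0\end{smallmatrix}\right)$, I would set $\Theta\big(\left(\begin{smallmatrix}0&b\\ c&0\end{smallmatrix}\right)\big)=\hat\rho\,\hat E_1(b)+\hat E_1(c)$, where $\rho\colon A_1\to M$ is the injective homomorphism $\varphi_{2k-1}\mapsto\varphi_{2k}^{-1}$ and $\hat E_1$ keeps the odd part; the entry-extractions $x\mapsto x_{12}$, $x\mapsto x_{21}$ are multiplications by matrix units, hence decomposable, so $\Theta$ is decomposable. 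A direct check gives $\Theta(U_k^*)=\hat\rho\hat E_1(y_{2k})+\hat E_1(y_{2k-1})=0+y_{2k-1}$ and $\Theta(U_k)=\hat\rho\hat E_1(y_{-(2k-1)})+\hat E_1(y_{-2k})=y_{2k}+0$, so $v:=\Theta\circ\hat\Pi$ is decomposable with $v(y_k)=y_{2k-1}$, $v(y_{-k})=y_{2k}$. With $w$ and $v$ in hand, each of the two families is dominated by the other along $f$, which is precisely decomposable equivalence. The step I expect to be genuinely delicate is the passage to $M\bar\otimes M_2$: one must verify that $U_k\mapsto\varphi_k$ is exactly distribution-preserving (so that $\Pi$ is trace-preserving and $\hat\Pi$ completely positive), which is what legitimizes trading the forbidden $\pm$-mode separation for a harmless matrix-entry extraction.
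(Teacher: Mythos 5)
Your proof is correct, and it takes a genuinely different route from the paper's. The paper produces both dominations from a single device: it passes to the free product $N=M\ast L$ with $L$ a copy of the von Neumann algebra of $\Z$ generated by a Haar unitary $U$ free from $M$, observes that $(Uy_n)_{n\in\Z_*}$ is then a free Haar unitary family --- hence trivially decomposably equivalent to $(y_n)_{n\ge 1}$ --- and gets the comparison of $(Uy_n)_{n\in\Z_*}$ with $(y_n)_{n\in\Z_*}$ for free because multiplication by $U^{\pm 1}$ is decomposable (polarization), a conditional expectation finally bringing everything back from $N_*$ to $M_*$. You instead build the two dominating maps separately: the easy direction $w$ by conditional expectations onto the odd/even subalgebras followed by trace-preserving re-routing homomorphisms, and the hard direction $v$ by the dilation $\varphi_k\mapsto U_k\in M\bar\otimes M_2$ followed by matrix-entry extraction. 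Your key claim is sound: an alternating product of nonzero powers of distinct $U_k$'s is a generalized permutation matrix whose nonzero entries are reduced nontrivial words of $\F_\infty$, hence has vanishing trace, and since each $W^*(U_k)$ is singly generated by a Haar unitary this suffices for $*$-freeness, so $\Pi$ is indeed trace preserving and Remark \ref{R6} applies. Both proofs rest on the same two pillars --- Remark \ref{R6} for preduals of trace-preserving $*$-homomorphisms and expectations, and decomposability of the non-self-adjoint two-sided multiplications $x\mapsto axb^*$ --- and your diagnosis of where the difficulty lies is exactly right: since $y_{2k}\neq y_{2k-1}^*$, any map sending $y_k\mapsto y_{2k-1}$ and $y_k^*\mapsto y_{2k}$ cannot intertwine adjoints, hence cannot be c.p., so some genuinely non-self-adjoint decomposable ingredient is forced (multiplication by $U$ in the paper, multiplication by matrix units in your argument). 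What each approach buys: the paper's is shorter and treats both directions symmetrically at one stroke; yours avoids free products beyond $M$ itself, is more explicit, and yields concrete dec-norm bounds (modulo the harmless normalization factor in the entry extraction coming from the normalized trace on $M\bar\otimes M_2$, which affects constants but not decomposability). Conceptually your dilation is a matrix avatar of the paper's trick: $U_k=\mathrm{diag}(\varphi_{2k-1},\varphi_{2k})\cdot(1\otimes\sigma)$ with $\sigma$ the flip, the flip playing the role of the mixing unitary $U$.
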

   \begin{proof} Let $(z_n)_{n\in \Z_*}$ be a sequence
   such that each $(z_n)_{n>0}$ and $(z_n)_{n<0}$
   are mutually  free, each one being a free Haar unitary sequence.
   Then $(z_n)_{n\in \Z_*}$ and $(\y_n)_{ n\ge 1}$ are  trivially
    decomposably equivalent.
   Let $L$ be a copy of the von Neumann algebra of $\Z$.
   Let $N=M\ast L$. Let $U$ denote the generator of $L$
   viewed as a subalgebra of $N$.
   We also view $M\subset N$. 
    Then the family $(Uy_n)_{ n\in \Z_*}$
    viewed as sitting in $N_*$
    is a family of free Haar unitaries.
    Therefore $(Uy_n)_{ n\in \Z_*}$ and 
    $(y_n)_{ n\ge 1}$ are   
    decomposably equivalent. But $(Uy_n)_{ n\in \Z_*}$
   and $(y_n)_{ n\in \Z_*}$
are also  decomposably equivalent  in $N_*$, because the multiplication
by $U$ or $U^{-1}$ is    decomposable
(roughly because, since $x\mapsto a xa^*$ is c.p.,
$x\mapsto a xb^*$ is decomposable by the polarization  formula).
Lastly using conditional expectations
it is easy to see that the  families
$(y_n)_{ n\in \Z_*}\subset N_* $
and $(y_n)_{ n\in \Z_*}\subset M_* $ (identical families
viewed as sitting in $N_* $ or $M_* $)
are  decomposably equivalent.
   \end{proof}
 Let $\cl A$ be the algebra generated by $(\varphi_n)_{n\in \Z_*}$.
 Note for further reference
 that the orthogonal projection $P_1$
 onto the closed span in $L_2(\tau)$ of $(\varphi_n)_{n\in \Z_*}$ 
 is defined by
 $$\forall a\in \cl A\quad  P_1(a)=\sum \tau(\varphi_n^*a) \varphi_n. $$

 We use ingredients analogous to those of \cite{Pi3}
 but in \cite{Pi3} the free group is replaced by the free Abelian group,
 and an ordinary gaussian sequence is used
 (we could probably use analogously a free semicircular sequence here).
 
 Let $U_n\in C^*(\F_\infty)$  be the unitaries coming from the  
 free generators. We set again by convention 
 $U_{-n}=U_n^{-1}$ ($  n\ge 1$).\\
 Let $\cl E={\rm span}[U_n\mid n\in \Z_*] \subset C^*(\F_\infty)$.
 Consider the
  natural 
  linear map
   $\pi:  \cl E\to M$
  such that
  $$\forall n\in \Z_*\quad\pi(U_n)={\varphi}_n.$$ 
  Its key property is that
  for some Hilbert space $H$ there is  a factorization
  of the form
  $$\cl E {\buildrel \pi_1\over
\longrightarrow } B(H) {\buildrel \pi_2\over
\longrightarrow }  M$$
such that
$$\forall n\in   {\Z_*}\quad \pi_2\pi_1(U_n)=\pi(U_n)={\varphi}_n$$
where $\|\pi_1\|_{cb}\le 1$, and $\pi_2$ is a
\emph{decomposable} map
with
$  \|\pi_2\|_{dec}=1$.
To check this note that $M$ embeds in a trace preserving way into
an ultraproduct $\cl M$ of matrix algebras, and there is a c.p. conditional
expectation from $\cl M$ onto $M$.
Therefore  
 there is a completely positive surjection $\pi_2$
 from $B=\prod\nl_{k} M_k$ to $M$ and a  $*$-homomorphism
 $\pi_1: C^*(\F_\infty) \to B$ such that ${\pi_2 \pi_1}_{| {\cl E }}  =\pi$.
To complete the argument we need to replace $B$ by $B(H)$. Since
$B$ embeds in $B(H)$ for some $H$
and there is a conditional expectation from 
 $B(H)$  to $B$, this is immediate.
We refer the reader to \cite[\S 9.10]{P4}   for more details.

 The following   statement
 on the free group factor $M$
 is the key for our results.
 \begin{thm}\label{key4}
  The sequence $(\y_n)_{ n\in {\Z_*}}$ in $   M$
 satisfies the following property:\\
 any bounded sequence $(z_n)_{ n\in {\Z_*}}$ in $M$ that is 
 biorthogonal to $(\y_n)_{ n\in {\Z_*}}$ 
 in $L_2(\tau)$ meaning that 
 $$\tau(z_n \y_m^*)=0 \text{  if  } n\not = m \text{  and  } \tau(z_n \y_n^*)=1,$$
 is completely $\otimes^2_{\max}$-Sidon.
 More generally, if $(z^1_n)_{ n\in {\Z_*}}$
 and $(z^2_n)_{ n\in {\Z_*}}$ are   bounded in $M$ and each biorthogonal to $(y_n)_{ n\in {\Z_*}}$,
 then $(z^1_n\otimes z^2_n)_{ n\in {\Z_*}}$ is completely Sidon in $M \otimes_{\max} M$.
  \end{thm}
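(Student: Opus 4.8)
My plan is to reduce the statement to the construction of a single completely bounded map on $M\otimes_{\max}M$ and then to build that map by free second quantization. Since the first assertion is the special case $z^1_n=z^2_n$ of the second, I treat the second. By the diagonal embedding of Example~(ii) (\cite[Th.~8.2]{P4}) the assignment $U_n\mapsto\y_n\otimes\y_n$ extends to a \emph{complete isometry} $j:C^*(\F_\infty)\hookrightarrow M\otimes_{\max}M$. Tensoring $j$ with the identity of any coefficient $C^*$-algebra $B$ is again completely isometric for the minimal tensor product, so by the definition of completely Sidon (Proposition~\ref{R3}) the left-hand side of the Sidon inequality for $(z^1_n\otimes z^2_n)$ equals $\|\sum a_n\otimes\y_n\otimes\y_n\|$ in $B\otimes_{\min}(M\otimes_{\max}M)$. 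Thus it is enough to bound this quantity by $C\,\|\sum a_n\otimes z^1_n\otimes z^2_n\|$; and for that it suffices to produce a \emph{decomposable} map $\Gamma:M\otimes_{\max}M\to M\otimes_{\max}M$ with
\[
\Gamma(z^1_n\otimes z^2_n)=\y_n\otimes\y_n\quad(\forall n),\qquad \|\Gamma\|_{dec}\le C.
\]
Indeed, by \eqref{e8} such a $\Gamma$ is completely bounded, so $id_B\otimes\Gamma$ is bounded on the minimal tensor product and sends $\sum a_n\otimes z^1_n\otimes z^2_n$ to $\sum a_n\otimes\y_n\otimes\y_n$.

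The exponent $2$ is forced because $\Gamma$ cannot be a tensor product $\Gamma_1\otimes\Gamma_2$ of single-leg maps sending $z^i_n$ to $\y_n$ (such a map would make $(z_n)$ completely Sidon outright); it must act on the two legs jointly. I would build it from the free Ornstein--Uhlenbeck semigroup, the non-commutative substitute announced in the introduction for the spectral device of \cite{Pi3}. For $0\le\rho\le1$ let $T_\rho:M\to M$ be the radial multiplier $T_\rho(\lambda(g))=\rho^{|g|}\lambda(g)$, where $|g|$ is the word length in $\F_\infty$. By Haagerup's classical result that $g\mapsto\rho^{|g|}$ is positive definite on the free group, each $T_\rho$ is unital and completely positive, hence $1$-decomposable; it fixes the scalar part, scales the length-$d$ part by $\rho^d$, and in particular multiplies the length-$1$ part by $\rho$. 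The role of the two-sided biorthogonality hypothesis (for all $m\in\Z_*$) is to pin down the length-$1$ part of each $z^i_n$ to be exactly $\y_n$: the projection $P_1$ introduced above gives $P_1(z^i_n)=\sum_{m\in\Z_*}\tau(\y_m^*z^i_n)\,\y_m=\y_n$. I would then assemble $\Gamma$ from the completely positive kernels $T_\rho\otimes T_\sigma$ on $M\otimes_{\max}M$, via a bounded signed combination in $\rho,\sigma$ designed to isolate the bidegree-$(1,1)$ contribution $\y_n\otimes\y_n$ while removing the scalar parts and damping the higher-degree remainders; the factorization $\pi=\pi_2\pi_1$ constructed above keeps all resulting maps decomposable with controlled dec-norm through \eqref{e8}--\eqref{e8'}, and Lemma~\ref{dom1} lets me pass freely between the index sets and normalize the generators.

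The main obstacle is exactly the isolation of the bidegree-$(1,1)$ part with a \emph{uniformly} decomposable kernel. Because the $z^i_n$ may carry length-$d$ components for arbitrarily large $d$, no finite signed combination of the $T_\rho\otimes T_\sigma$ can reproduce $\y_n\otimes\y_n$ exactly while suppressing all other bidegrees (the associated moment problem on $[0,1]$ has no finite-variation solution). I therefore expect the real work to be an estimate in the spirit of the positivity of the Mehler kernel in \cite{Pi3}: one builds a decomposable $S_\rho$ (a combination of the $T_{\rho'}$) that reproduces the length-$1$ part exactly, kills the scalar part, and leaves a remainder supported in lengths $\ge2$ of small relative norm, and then controls the cross terms $\y_n\otimes r^2_n$, $r^1_n\otimes\y_n$ and $r^1_n\otimes r^2_n$ of $S_\rho z^1_n\otimes S_\rho z^2_n$ so that the bidegree-$(1,1)$ term dominates. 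Establishing this domination uniformly in $n$ and in the number of terms---equivalently, proving that the relevant joint kernel on the full Fock model of $M\otimes_{\max}M$ is decomposable with an absolute bound---is the technical heart, and is where the combinatorics of the \emph{full} (rather than symmetric) Fock space replaces the Hermite spectral analysis of \cite{Pi3}. Once this kernel estimate is secured, complete positivity of second quantization together with \eqref{e8}--\eqref{e8'} closes the argument.
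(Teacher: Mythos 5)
Your reduction via the diagonal embedding $U_n\mapsto \y_n\otimes\y_n$ of $C^*(\F_\infty)$ into $M\otimes_{\max}M$ is sound, but the proof stops exactly where the theorem begins: you never construct the map $\Gamma$ (nor the fallback ``joint kernel''), and you yourself concede that establishing the required uniform decomposability estimate is ``the technical heart.'' That concession is the gap. Worse, the target you set --- a decomposable $\Gamma$ on $M\otimes_{\max}M$ with uniformly bounded dec-norm sending $z^1_n\otimes z^2_n$ to $\y_n\otimes\y_n$ \emph{exactly} --- is stronger than anything the paper proves, and there is no evident route to it: the natural candidate is built from $P_1\otimes P_1$, which is precisely the map whose dec-norm one cannot control (the whole point of the splitting $P_1=T_\vp+R_\vp$ in \eqref{e0}--\eqref{e2} is that $P_1$ itself is only decomposable up to an error that is small merely as a map into $L_2(\tau)$). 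Nor can you define $\Gamma$ on the span of the $z^1_n\otimes z^2_n$ and extend: decomposable maps into a non-injective target such as $M\otimes_{\max}M$ have no extension theorem (this is why the paper factorizes $\pi=\pi_2\pi_1$ through $B(H)$ in \S \ref{sf0} before it can even state Proposition \ref{p1}).

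The paper's proof goes around this obstacle rather than through it, and the mechanism it uses is absent from your sketch. It applies the Haagerup multiplier $T_\vp$ (your $S_\rho$) to \emph{one} leg only, at dec-cost $2/\vp$ (Lemma \ref{l1}), writes $T_\vp(z^1_n)\otimes {z^2_n}^*=(\y_n+r^1_n)\otimes(\y_n+r^2_n)^*$, and then composes with $(id\otimes\gamma)(V\otimes id_{M^{op}})$, where $V:\varphi_n\mapsto u_n\otimes\varphi_n$ is a $*$-homomorphism encoding the arbitrary unitaries $u_n$ of the Sidon inequality, and $\gamma(a\otimes a')=\tau(aa')$ is the tracial state on $M\otimes_{\max}M^{op}$. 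This collapses everything into $M_{k'}$: the $(\y_n,\y_n)$ term gives exactly $u_n$, the two cross terms vanish identically because $r^j_n\perp\y_n$ in $L_2(\tau)$, and the $(r^1_n,r^2_n)$ term has norm at most $\|r^1_n\|_2\|r^2_n\|_2\le\vp C'_1C'_2$ by Cauchy--Schwarz; choosing $\vp=(4C'_1C'_2)^{-1}$ and invoking \eqref{e89} absorbs the error. So no isolation or domination of the bidegree-$(1,1)$ part inside $M\otimes_{\max}M$ is ever needed --- exact reproduction is replaced by an error term killed by biorthogonality and orthogonality. Note finally a point your plan would also have to confront: the trace pairing $\gamma$ is a state on $M\otimes_{\max}M^{op}$, \emph{not} on $M\otimes_{\max}M$, so the paper must end with a transposition $*$-isomorphism $\chi:M^{op}\to M$ (legitimate here because the generators have real matrix entries) to convert the $M\otimes_{\max}M^{op}$ statement into the asserted one.
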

    
   Let $(z^1_n)$
 and $(z^2_n)$ be as in Theorem \ref{key4}.
   Assume $\|z^j_n\|\le C'_j$ for all $n\in {\Z_*}$ ($j=1,2$).
 
 Fix integers $k,k'\ge 1$. Let $(a_n)$ be a family in $M_k$ with only finitely many $n$'s
 for which $a_n\not= 0$.
 Let  $(u_n)_{n\in {\Z_*}}$ be unitaries in $M_k$ such that 
 $u_{-n}=u_n^{-1}$ for all $n$.  
Our goal is to  show that there is a constant $\alpha$ depending only 
on  $C'_1,C'_2$ such that
$$ \|\sum u_n \otimes a_n \|_{M_{k'}\otimes_{\min}  M_k}\le \alpha\|  \sum a_n \otimes z^1_n \otimes  {z^2_n}^* \|_{M_k(     M \otimes_{\max} M^{op})}.$$
This will prove the key Theorem \ref{key4}
with $M \otimes_{\max} M^{op}$ instead of 
$M \otimes_{\max} M$. Then a simple elementary argument will allow us
to replace $M^{op}$ by $M$.

 \begin{rem}\label{r1}
Let $T: M \to M^*$ be  a  c.p. map such that $T(1)(1)=1$.
We associate to it a state $f$ on $M\otimes_{\max}  M$
by setting
$$f(x\otimes y)= T(x)(y).$$
A matrix $x\in M_k (M^*)$
is defined as $\ge 0$ if $\sum_{ij} x_{ij}(y_{ij}) \ge 0$
for all $y\in M_k(M)_+$.\\
More generally, any decomposable
operator $T$ on $M$ (in particular any finite rank one)
determines an element $\Phi_{T}$ of $(M\otimes_{\max} M^{op})^*$,
defined by for $x,y\in M$ by
$$\langle \Phi_{T}, x\otimes y\rangle=
\tau(T(x)y).
$$
Indeed, the bilinear form
$(x,y)\mapsto \tau(xy)$ is of unit norm in 
$(M\otimes_{\max} M^{op})^*$ and
 $$\|T\otimes id_{M^{op}}: M\otimes_{\max} M^{op}\to M\otimes_{\max} M^{op} \|\le \|T\|_{dec}.$$
Furthermore, for any pair of $C^*$-algebras $A,B$, we  have a 1-1-correspondence between
the set of decomposable maps $T: A\to B^*$ and 
$(A\otimes_{\max} B)^*$.

\end{rem} 
 \begin{rem}\label{r2}
 We will need the free analogue of Riesz products.\\
Recall  we set $M=M_{\F_\infty}$.
Let $0\le \vp\le 1$.
Let $P_\ell$ the orthogonal projection on $L_2(\tau)$ onto the span of the words
of length $\ell$ in $\F_\infty$. Let ${\theta}_\vp =\sum_{\ell\ge 0} \vp^\ell P_\ell$.
  By Haagerup's well known result \cite{Hainv},
${\theta}_\vp$ is a c.p. map on $M$. Composing it with the inclusion
$M\subset M_*$, we find a unital c.p. map
from $M  $ to $ {M^{op}}^*$, and hence ${\theta}_\vp$ 
determines a state $f_\vp$ on $M\otimes_{\max} M^{op}$.
 
 We view $\theta_\vp$ as acting
 from $M$ to $L_2(\tau)$.
We can also consider it as a map taking $\cl A$ to itself.

We  will crucially use the   decomposition
$  ({\theta}_\vp-{\theta}_0)/\vp= P_1+  \sum_{{\ell}\ge 2} \vp^{{\ell}-1} P_{\ell}.$
We set $$T_\vp=({\theta}_\vp-{\theta}_0)/\vp\text{ and }
  R_\vp= - \sum_{{\ell}\ge 2} \vp^{{\ell}-1} P_{\ell},$$
  so that
 \begin{equation}\label{e0} P_1= T_\vp+R_\vp.\end{equation}
We have
 \begin{equation}\label{e1}\|T_\vp\|_{dec}\le 2/\vp\end{equation}
and
 \begin{equation}\label{e2}\|R_\vp: M \to L_2(\tau)\|\le \|R_\vp: L_2(\tau) \to L_2(\tau)\|\le  \vp.\end{equation}
\end{rem}

\begin{lem}\label{l1}   
  With the preceding notation,   we have
 \begin{equation}\label{e3}   \|  \sum a_n \otimes T_\vp(z^1_n) \otimes  {z^2_n}^* \|_{M_k(     M \otimes_{\max} M^{op})}
 \le (2/\vp) \|  \sum a_n \otimes z^1_n \otimes  {z^2_n}^* \|_{M_k(     M \otimes_{\max} M^{op})}
   .\end{equation}
\end{lem}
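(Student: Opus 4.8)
The plan is to realize the left-hand side as the image of the right-hand side under a single decomposable map acting on the middle tensor leg, and then to read off the factor $2/\vp$ directly from the dec-norm bound \eqref{e1} together with the tensorial behavior \eqref{e8}--\eqref{e8'} of decomposable maps.

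First I would make sure $T_\vp$ is genuinely a decomposable map \emph{from $M$ to $M$}, rather than merely a map into $L_2(\tau)$. By Haagerup's theorem $\theta_\vp$ is a c.p. map on $M$, and $\theta_0=\tau(\cdot)1$ is c.p. as well, so $T_\vp=(\theta_\vp-\theta_0)/\vp$ is a linear combination of c.p. maps on $M$; the estimate \eqref{e1} records $\|T_\vp\|_{dec}\le 2/\vp$. In particular $T_\vp(z^1_n)\in M$, so the left-hand side of \eqref{e3} is a well-defined element of $M_k(M\otimes_{\max}M^{op})$.

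Next, since $M_k$ is finite-dimensional (hence nuclear) we have the isometric identification $M_k(M\otimes_{\max}M^{op})=M_k\otimes_{\max}M\otimes_{\max}M^{op}$. Consider the map
$$\Theta=id_{M_k}\otimes T_\vp\otimes id_{M^{op}}\colon M_k\otimes_{\max}M\otimes_{\max}M^{op}\to M_k\otimes_{\max}M\otimes_{\max}M^{op},$$
which by construction sends $\sum a_n\otimes z^1_n\otimes {z^2_n}^*$ to $\sum a_n\otimes T_\vp(z^1_n)\otimes {z^2_n}^*$. Because $\otimes_{\max}$ is commutative and associative up to isometric $*$-isomorphism, applying a decomposable map to a single leg is insensitive to the position of that leg, so $\Theta$ is of the form $id_C\otimes T_\vp$ with $C=M_k\otimes_{\max}M^{op}$. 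Hence the fundamental property of decomposable maps (the norm bound $\|id_C\otimes v\|\le\|v\|_{dec}$ stated just before \eqref{e8}, equivalently \eqref{e8}--\eqref{e8'} with identities on the remaining factors) gives
$$\|\Theta\|\le \|id_{M_k}\|_{dec}\,\|T_\vp\|_{dec}\,\|id_{M^{op}}\|_{dec}\le 2/\vp.$$
Applying this operator-norm bound to the vector $\sum a_n\otimes z^1_n\otimes {z^2_n}^*$ yields \eqref{e3} immediately.

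I do not expect a genuine obstacle here; the statement is essentially a one-line consequence of \eqref{e1} once the framework is set up. The only points requiring a little care are the bookkeeping that $T_\vp$ lands in $M$ (so that it is decomposable in the $C^*$-sense, not just as a map into $L_2(\tau)$ as in Remark \ref{r2}), and the observation that the tensorial dec-norm estimate does not depend on which leg is acted upon, thanks to the isometric symmetry of the maximal tensor product. Everything else is a direct substitution.
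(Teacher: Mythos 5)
Your proof is correct and follows exactly the paper's route: the paper proves Lemma \ref{l1} by simply invoking \eqref{e8'}, i.e.\ by applying the decomposable map $T_\vp$ (with $\|T_\vp\|_{dec}\le 2/\vp$ from \eqref{e1}) to one tensor leg and the identity to the others, which is precisely your map $\Theta$. The details you supply — that $T_\vp$ is a genuine decomposable map on $M$ as a combination of the c.p.\ maps $\theta_\vp$ and $\theta_0$, and that the position of the leg is immaterial — are exactly the bookkeeping the paper leaves implicit.
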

 \begin{proof}
 This follows from \eqref{e8'}. \end{proof}
 \begin{proof}[Proof of Theorem \ref{key4}]
 Fix $\vp<1$ (to be determined later). We have   decompositions
 $$T_\vp(z^1_n) = \y_n + r^1_n $$
 $$ z^2_n = \y_n + r^2_n $$
 where $r^1_n= -R_\vp(z^1_n  )$ and $r^2_n $
 are orthogonal to  $(\y_n)_{n\in {\Z_*}}$ and moreover
 $$\|r^1_n\|_2=\|R_\vp(z^1_n  )\|_2 \le \vp \|z^1_n\|_2    \le \vp    C'_1,$$
 $$\|r^2_n\|_2\le \| z^2_n\|_2 \le     C'_2.$$
 
 We have
 $$T_\vp(z^1_n) \otimes  {z^2_n}^*
 =(\y_n + r^1_n) \otimes (\y_n + r^2_n)^*.$$
 The idea will be to reduce this product to the simplest term $\y_n  \otimes \y_n^*$.\\
 Let $V: M\to M_k(M)$ be the isometric $*$-homomorphism
 taking $\varphi_n$ to $u_n\otimes \varphi_n$.
 Note that  $V$ is     decomposable with $\|V\|_{dec}=1$.
 We observe
 $$(V\otimes  id_{M^{op}} )(\y_n   \otimes \y_n   ^* ) =u_n \otimes \y_n   \otimes \y_n^*.$$
 Let $\gamma: M\otimes M^{op} \to \C$ be the bilinear
form defined by $\gamma(a\otimes a')=\tau(aa')$.
It is a classical fact that $\gamma$ is a state on $M\otimes_{\max} M^{op} $.
 We claim
  \begin{equation}\label{e88}\|(id_{M_k} \otimes \gamma)(V\otimes  id_{M^{op}} )(r^1_n   \otimes {r^2_n}^* ) \|_{M_k}
 \le \vp C'_1C'_2.\end{equation}
Indeed, let $\F=\F_\infty$ for simplicity.
We may develop in $L_2(\tau)$
$$r^1_n= \sum\nl_{t\in \F}  r^1_n(t) \lambda_\F(t) \text{  and   }
 r^2_n= \sum\nl_{t\in \F}  r^2_n(t) \lambda_\F(t).$$
 Let $\sigma$ be the unitary representation
 on $\F$ taking $g_n$ to $u_n\in M_{k'}$. For simplicity
 we denote $u_t=\sigma(t)$ for any $t\in \F$. With this notation
 $V(\lambda_\F(t))= u_t\otimes \lambda_\F(t).$
Then
$$(V\otimes  id_{M^{op}} )(r^1_n)=  \sum\nl_{t\in \F}  r^1_n(t) u_t\otimes \lambda_\F(t),$$
$$(id_{M_{k'}} \otimes \gamma)(V\otimes  id_{M^{op}} )(r^1_n   \otimes {r^2_n}^* )=
 \sum\nl_{t\in \F}  r^1_n(t) \ovl{ r^2_n(t)  } u_t $$
and hence (triangle inequality and Cauchy-Schwarz)
$$\|(id_{M_{k'}} \otimes \gamma)(V\otimes  id_{M^{op}} )(r^1_n   \otimes {r^2_n}^* ) \|_{M_{k'}}
 \le\|r^1_n\|_2 \|r^2_n\|_2 \le \vp C'_1C'_2 .
 $$
 This proves our claim.
 Let
 \begin{equation}\label{e20}\d_n= (id_{M_{k'}} \otimes \gamma)(V\otimes  id_{M^{op}} )(r^1_n   \otimes {r^2_n}^* ).\end{equation}
 Recalling the orthogonality relations $\y_n \perp r^1_n$ and $\y_n \perp r^2_n$
 we see that
 $$(id_{M_{k'}} \otimes \gamma)(V\otimes  id_{M^{op}} ) (  T_\vp(z^1_n)   \otimes {z^2_n}^* )
)
 =  (id_{M_{k'}} \otimes \gamma)(V\otimes  id_{M^{op}} )(\y_n   \otimes {\y_n}^* )
 +(id_{M_{k'}} \otimes \gamma)(V\otimes  id_{M^{op}} )(r^1_n   \otimes {r^2_n}^* )$$
 $$= u_n  + \d_n.$$
 We now go back to \eqref{e3}:
 we have
 $$
  (id_{M_k} \otimes id_{M_{k'}} \otimes \gamma)(V\otimes  id_{M^{op}} )\sum a_n \otimes T_\vp(z^1_n) \otimes  {z^2_n}^*  =
   \sum a_n \otimes (u_n+ \d_n).  $$
   Therefore (the norm $\|\ \|$ is the norm in ${M_{k'}\otimes_{\min}  M_k}$)
   $$\|  \sum a_n \otimes (u_n+ \d_n)\|\le \|   \sum a_n \otimes T_\vp(z^1_n) \otimes  {z^2_n}^*   \|_{M_k(     M \otimes_{\max} M^{op})}
$$
and hence by \eqref{e3}
$$\|  \sum a_n \otimes (u_n+ \d_n)\|\le (2/\vp) \|   \sum a_n \otimes z^1_n  \otimes  {z^2_n}^*   \|_{M_k(     M \otimes_{\max} M^{op})}.
$$
By the triangle inequality
$$\|  \sum a_n \otimes u_n \|  -\|  \sum a_n \otimes \d_n \|   \le (2/\vp) \|   \sum a_n \otimes z^1_n  \otimes  {z^2_n}^*   \|_{M_k(     M \otimes_{\max} M^{op})} .
$$
Recalling \eqref{e88} and \eqref{e20}
we find
$$\|  \sum a_n \otimes u_n \| -\vp C'_1C'_2 \sup\nl_{b_n\in B_{M_{k'}}}\|  \sum a_n \otimes b_n \|\le (2/\vp) \|   \sum a_n \otimes z^1_n  \otimes  {z^2_n}^*   \|_{M_k(     M \otimes_{\max} M^{op})} .
$$
Taking the sup over all $u_n$'s and using \eqref{e89} (recall $B_{M_{k'}}$ is the convex hull
of $U(k')$)
we find
$$(1/2-\vp C'_1C'_2)  \sup\nl_{b_n\in B_{M_{k'}}}\|  \sum a_n \otimes b_n \|\le (2/\vp) \|   \sum a_n \otimes z^1_n  \otimes  {z^2_n}^*   \|_{M_k(     M \otimes_{\max} M^{op})} .
$$
 This completes the proof for $M \otimes_{\max} M^{op}$, since
 if we choose, say, $\vp=\vp_0$ with $\vp_0=(4 C'_1C'_2)^{-1}$
 we obtain the announced result with $\alpha= 8/\vp_0=32 C'_1C'_2$.

It remains to justify the replacement of $M^{op}$ by $M$.
For this it suffices to exhibit
  a (normal) $\C$-linear $*$-isomorphism $\chi: M^{op}\to M$
such that   $\chi(\varphi^*_n)= \varphi_n$ for all $n\in {\Z_*}$.
Indeed, let us view $M\subset B(H)$
 with $H=\ell_2(\F_\infty)$. 
 Then
 since ${}^t\varphi_n=\varphi_n^*$ for all $n\in {\Z_*}$
 (these are matrices with real entries),  the matrix transposition $ x\mapsto {}^t x$
 is the required $*$-isomorphism $\chi: M^{op}\to M$.
  \end{proof}
    
   \begin{cor}\label{key2} Let $(z^1_n)_{ n\ge 1}$
 and $(z^2_n)_{ n\ge 1}$ be    bounded in $M$ and each biorthogonal to $(y_n)_{ n\ge 1}$,
 then $(z^1_n\otimes z^2_n)_{ n\ge 1}$ is completely Sidon in $M \otimes_{\max} M$.
   \end{cor}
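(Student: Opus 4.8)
The plan is to deduce Corollary~\ref{key2} from the $\Z_*$-indexed Theorem~\ref{key4}, since the two statements differ only in the index set and in the strength of the biorthogonality hypothesis; Lemma~\ref{dom1} is tailor-made for this passage between $\N$ and $\Z_*$. First I would set up a descent through an enlargement. Put $N=M\ast L$, where $L$ is a free copy of $M$ with its own free Haar unitary generators $(\psi_l)_{l\ge 1}$, so that $N\cong M$ is again a free group factor, the inclusion $M\subset N$ is trace preserving, and there is a normal trace-preserving conditional expectation $E\colon N\to M$. The idea is to realise the positively-biorthogonal $\N$-data as the positive half of a $\Z_*$-indexed system in $N$, to apply Theorem~\ref{key4} there, and then to come back down to $M$.

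For the descent itself everything is clean. Suppose one has arranged $\Z_*$-indexed sequences $(W^1_k),(W^2_k)$ in $N$, with $W^j_n=z^j_n$ for $n\ge 1$, that satisfy the hypotheses of Theorem~\ref{key4} relative to $N$. That theorem then gives that $(W^1_k\otimes W^2_k)_{k\in\Z_*}$ is completely Sidon in $N\otimes_{\max}N$; restricting to $k=n\ge 1$ shows $(z^1_n\otimes z^2_n)_{n\ge 1}$ is completely Sidon in $N\otimes_{\max}N$. Since $E$ is c.p., hence $1$-decomposable, \eqref{e8} shows that $E\otimes E$ is a completely contractive projection of $N\otimes_{\max}N$ onto $M\otimes_{\max}M$, so that $M\otimes_{\max}M$ sits completely isometrically in $N\otimes_{\max}N$. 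As the elements $z^1_n\otimes z^2_n$ already lie in $M\otimes_{\max}M$, complete Sidonness descends to $M\otimes_{\max}M$, which is the assertion.

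The hard part, and the step I expect to be the genuine obstacle, is producing the $W^j_k$: the hypothesis here gives biorthogonality only against the generators $(y_n)_{n\ge 1}$, whereas Theorem~\ref{key4} rests on the word-length semigroup $\theta_\vp$, whose degree-one projection $P_1$ necessarily also detects the inverse generators $\varphi_m^{-1}$. Concretely, the hypothesis controls $\langle z^j_n,\varphi_m\rangle$ but leaves $\langle z^j_n,\varphi_m^{-1}\rangle$ completely free, and when these components are carried through the main computation of Theorem~\ref{key4} they contribute, via the pairing $\gamma$, a term of order $1$ that the small parameter $\vp$ cannot absorb. Merely padding the negative indices with the fresh generators $\psi_l$ repairs the index bookkeeping but not this defect, since $\varphi_m^{-1}$ is still a degree-one word of $N$. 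The real task is therefore to upgrade \emph{biorthogonal to} $(y_n)_{n\ge 1}$ to biorthogonality against \emph{every} degree-one element seen by the grading; I would try to do this by replacing the symmetric word-length grading with a one-sided, holomorphic grading that registers only the chosen generators---morally the grading carried by the creation operators on the full Fock space, accessed through the factorization $\cl E\to B(H)\to M$ used above---after which the proof of Theorem~\ref{key4} should run unchanged, now invoking criterion~(ii) of Proposition~\ref{R3} (arbitrary unitaries) in place of the symmetric criterion~(iii).
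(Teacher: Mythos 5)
You have correctly isolated the crux: Theorem \ref{key4} requires biorthogonality to the \emph{full} system $(y_n)_{n\in\Z_*}$, i.e.\ to the functionals of the generators \emph{and} of their inverses, whereas the Corollary only assumes biorthogonality to $(y_n)_{n\ge 1}$, so the pairings $\tau(z^j_n\varphi_m)$ are uncontrolled; and you are right that padding the negative indices with fresh free generators in $N=M\ast L$ cannot repair this. But your proposed repair --- re-proving Theorem \ref{key4} with a ``one-sided/holomorphic'' grading whose degree-one projection sees only the $\varphi_m$ and not the $\varphi_m^{-1}$ --- is exactly the hard point, and it is left unexecuted. What would be needed is a substitute for Haagerup's theorem: multipliers of controlled dec-norm whose suitable combination equals the projection onto ${\rm span}[\varphi_m \mid m\ge 1]$ up to an $L_2$-error of order $\vp$. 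The object you invoke (``the grading carried by the creation operators on the full Fock space, accessed through the factorization $\cl E\to B(H)\to M$'') does not supply it: that factorization goes through an ultraproduct of matrix algebras and a conditional expectation and carries no grading of $M$ whatsoever. (Such a one-sided calculus can in fact be built, e.g.\ from the positive definite functions $t\mapsto z^{a(t)}\bar z^{\,b(t)}$, $|z|\le 1$, where $a(t),b(t)$ count positive and negative letters --- these are products of Haagerup's radial function with a character of $\F_\infty$, and Fourier averaging in $z$ extracts the $(1,0)$-component with dec-norm $\le 1/\vp$; but none of this is in your text, so as written the proof has a hole exactly where you located the obstacle.)

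The irony is that you named the right lemma and then walked past it. The paper closes the gap with Lemma \ref{dom1} used in the direction \emph{opposite} to yours: instead of extending $(z^j_n)_{n\ge 1}$ to a $\Z_*$-indexed system (which needs orthogonality you do not have), one pulls back. Lemma \ref{dom1} gives a decomposable $v:M_*\to M_*$ and a bijection $f:\Z_*\to\N_*$ with $v(y_n)=y_{f(n)}$ for all $n\in\Z_*$. Set $w^j_n=v^*(z^j_{f(n)})\in M$, a bounded sequence since $v^*$ is bounded. Then for all $n,n'\in\Z_*$
$$\langle y_n, w^j_{n'}\rangle=\langle v(y_n), z^j_{f(n')}\rangle=\langle y_{f(n)}, z^j_{f(n')}\rangle=\d_{f(n)f(n')}=\d_{nn'},$$
so $(w^j_n)_{n\in\Z_*}$ is biorthogonal to the whole family $(y_n)_{n\in\Z_*}$: the dangerous pairings against the inverses $\varphi_m^{-1}$ vanish automatically, because every test against a $y_n$ is routed through $v$, which maps the entire $\Z_*$-family onto the $\N$-indexed one. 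Theorem \ref{key4} then applies verbatim to $(w^1_n),(w^2_n)$; since $w^1_n\otimes w^2_n=(v^*\otimes v^*)(z^1_{f(n)}\otimes z^2_{f(n)})$, inequality \eqref{e8'} transfers the complete Sidon property back to $(z^1_{f(n)}\otimes z^2_{f(n)})_{n\in\Z_*}$, hence by permutation invariance to $(z^1_n\otimes z^2_n)_{n\ge 1}$ in $M\otimes_{\max}M$. Your descent step ($E\otimes E$, embedding of $M\otimes_{\max}M$ into $N\otimes_{\max}N$) is fine but unnecessary; the whole argument never leaves $M$.
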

    \begin{proof} By Lemma \ref{dom1} we know that
     $(y_n)_{ n\ge 1}$ is dominated by $(y_n)_{ n\in {\Z_*}}$.
     Let $v: M_* \to M_*$ decomposable taking $(y_n)_{ n\in {\Z_*}}$
     to $(y_n)_{ n\ge 1}$ (modulo a suitable bijection $f: {\Z_*}\to \N_*$).
     Then $(v^*(z^j_{f(n)}))_{ n\in {\Z_*}}$ ($j=1,2$)
     is   biorthogonal to $(y_{f(n)})_{ n\in {\Z_*}}$.
     By Theorem \ref{key4},  
       $(v^*(z^1_{f(n)}) \otimes v^*(z^2_{f(n)}))_{ n\in {\Z_*}}$
     is completely Sidon in $M\otimes_{\max} M$.
     By \eqref{e8'}
     $( z^1_{f(n)} \otimes  z^2_{f(n)} )_{ n\in {\Z_*}}$
     is completely Sidon in $M\otimes_{\max} M$.
     Equivalently since this   is obviously invariant under permutation,
     we conclude
      $( z^1_{n} \otimes  z^2_{n} )_{ n\in \N_*}$
     is completely Sidon in $M\otimes_{\max} M$.
     
   \end{proof}
   
   \section{Main results. Free unitary domination}\label{sf}

 We start with a simple but crucial observation that links
 completely Sidon sets with 
 the free analogues of Rademacher functions or independent gaussian
 random variables.
 \begin{pro}\label{p1}
 Let $\Lambda=\{\psi_n\mid n\ge 1\}$  
be a completely Sidon set in $A$ with constant $C$.
Then there is a biorthogonal system
$(x_n)_{n\ge 1}$ in $A^*$
that is $C$-dominated
by   $(y_n)_{n\ge 1}$. 
 \end{pro}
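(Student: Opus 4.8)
The plan is to imitate the elementary commutative argument recalled in the introduction: I will produce a single decomposable map $\tilde u: A\to M$ sending each $\psi_n$ to $\varphi_n$ with $\|\tilde u\|_{dec}\le C$, and then extract the biorthogonal system by dualizing. Granting such a $\tilde u$, consider its pre-adjoint $v: M_*\to A^*$ defined by $v(\xi)=\xi\circ\tilde u$. Setting $x_n:=v(y_n)\in A^*$, biorthogonality is immediate, since $x_n(\psi_m)=y_n(\tilde u(\psi_m))=y_n(\varphi_m)=\tau(\varphi_n^*\varphi_m)=\d_{nm}$, and by construction $v(y_n)=x_n$. It will therefore only remain to check that $v$ is $C$-decomposable in the sense of Definition \ref{dom}, i.e. that $v^*: A^{**}\to M$ is decomposable with $\|v^*\|_{dec}\le C$.

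The construction of $\tilde u$ is where the hypothesis enters, and where the main obstacle lies, namely the non-injectivity of $M$: one cannot extend the map $\psi_n\mapsto\varphi_n$ directly into $M$. This is exactly what the factorization established just before Theorem \ref{key4} circumvents. By the definition of completely Sidon with constant $C$, the map $u: {\rm span}\{\psi_n\}\to\cl E\subset C^*(\F_\infty)$ with $u(\psi_n)=U_n$ satisfies $\|u\|_{cb}\le C$. I then use the factorization $\cl E\xrightarrow{\pi_1}B(H)\xrightarrow{\pi_2}M$, where $\|\pi_1\|_{cb}\le 1$, $\pi_2$ is completely positive with $\|\pi_2\|_{dec}=1$, and $\pi_2\pi_1(U_n)=\varphi_n$. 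The composite $\pi_1 u:{\rm span}\{\psi_n\}\to B(H)$ is c.b. with $\|\pi_1 u\|_{cb}\le C$; since $B(H)$ is injective, the Arveson--Wittstock extension theorem yields $\Theta: A\to B(H)$ extending $\pi_1 u$ with $\|\Theta\|_{cb}\le C$. Because the target is injective, Haagerup's theorem \cite{Haa} gives $\|\Theta\|_{dec}=\|\Theta\|_{cb}\le C$.

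I then set $\tilde u:=\pi_2\Theta: A\to M$, so that $\tilde u(\psi_n)=\pi_2\pi_1(U_n)=\varphi_n$, and, since the dec-norm is submultiplicative under composition \cite{Haa}, $\|\tilde u\|_{dec}\le\|\pi_2\|_{dec}\|\Theta\|_{dec}\le C$. Note that routing the extension through $\pi_1$ first is essential: it guarantees $\pi_2\pi_1(U_n)=\varphi_n$ on the nose, which one could not achieve by extending into $B(H)\supset M$ and then applying $\pi_2$, since $\pi_2$ need not fix $M$ pointwise.

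Finally I return to $v=(\tilde u)^*|_{M_*}$. One checks that $v^*: A^{**}\to M$ is weak-$*$ continuous and restricts to $\tilde u$ on $A$, so it is the unique normal extension of $\tilde u$ to the enveloping von Neumann algebra $A^{**}$. A decomposable map extends normally to a decomposable map of the same dec-norm: choosing a witness $V: A\to M_2(M)$ as in \eqref{d12} with $\max\{\|S_1\|,\|S_2\|\}\le\|\tilde u\|_{dec}+\vp$ and passing to its (unique) normal extension $A^{**}\to M_2(M)$, which is again c.p. with the same norm, exhibits $v^*$ as decomposable with $\|v^*\|_{dec}\le\|\tilde u\|_{dec}\le C$. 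By the convention $\|v\|_{dec}=\|v^*\|_{dec}$ of Definition \ref{dom}, this says precisely that the biorthogonal system $(x_n)$ is $C$-dominated by $(y_n)$, which completes the proof. The only steps requiring care are the two structural inputs of the middle paragraphs (equality of the $cb$- and $dec$-norms into $B(H)$, and stability of the dec-norm under the normal extension to $A^{**}$); both are standard facts about the dec-norm.
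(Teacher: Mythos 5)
Your proof is correct and takes essentially the same route as the paper's: the same c.b.\ map $\psi_n\mapsto U_n$ into $\cl E$, the same factorization $\cl E\to B(H)\to M$ combined with the Arveson--Wittstock extension and Haagerup's equality of the dec- and cb-norms for maps into the injective $B(H)$, and the same dualization $x_n=V_*(y_n)$ to produce the biorthogonal system. The only difference is that you spell out why the predual map is $C$-decomposable (via the normal extension of a c.p.\ witness to $A^{**}$), a point the paper simply asserts.
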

  \begin{proof}  
Let $E\subset A$ be the linear span
of $\{\psi_n\} $.
Let $\alpha: E \to  \cl E $
be the linear map such that $\alpha (\psi_n)= U_n$.
By our assumption $\|\alpha\|_{cb}\le C$.
We have $\|\pi_1 \alpha: E\to B(H)\|_{cb}\le C$.
By the injectivity of $B(H)$,
$\pi_1 \alpha$ admits  an extension  
$\beta: A \to B(H)$ with $\|\beta\|_{cb}\le C$.
Note (see \cite{Haa}) that 
$\|\beta\|_{dec}=\|\beta\|_{cb}$.
Let $V=\pi_2\beta: A \to M$.
Then $V$ is a $C$-decomposable  map.
Its adjoint $V_*: M_* \to A^*$ is also $C$-decomposable.
Let $ {y_n}\in M_*$ be the functionals
biorthogonal to  the sequence $({\varphi}_n)$ 
  defined above in $M$.
We have $ {y_n}({\varphi}_m)=\d_{nm}$.
Therefore since $V(\psi_m) =\pi_2\beta(\psi_m)
=\pi_2\pi_1 \alpha(\psi_m)={\varphi}_m$
$$  {y_n} (V(\psi_m))=\d_{nm}.$$
Thus setting $x_n=V_*( {y_n})$
we find  $   {x_n} (\psi_m)=\d_{nm}.$
This shows that $(x_n)$,
which is by definition $C$-dominated
by $(y_n)$,  is biorthogonal to $(\psi_n)$.
  \end{proof}
 \begin{thm}\label{t1} 
 Let   $A_1$, $A_2$ be $C^*$-algebras.
 Let $(\psi^1_n)_{n\ge 1}$, $(\psi^2_n)_{n\ge 1}$  be bounded sequences  in $A_1$, $A_2$
 bounded by $C_1'$ and  $C_2'$ respectively. Let
   $(x^1_n)_{n\ge 1}$ be a sequence  in $A_1^*$ biorthogonal to $(\psi^1_n)_{n\ge 1}$,
   and let $(x^2_n)_{n\ge 1}$ be a sequence  in $A_2^*$ biorthogonal to $(\psi^2_n)_{n\ge 1}$.
   If both are dominated by $(y_n)_{n\ge 1}$, then
   $(\psi^1_n\otimes \psi^2_n)_{n\ge 1}$ is completely Sidon in $A_1\otimes_{\max} A_2$.\\
 More precisely, if   $(x^j_n)_{n\ge 1}$ is $c_j$-dominated by $(y_n)_{n\ge 1}$,
 $(\psi^1_n\otimes \psi^2_n)_{n\ge 1}$ is completely Sidon in $A_1\otimes_{\max} A_2$ with a constant $C$ depending only on $C_1',C_2', c_1,c_2$.
 \end{thm}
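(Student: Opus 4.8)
The plan is to reduce everything to Corollary \ref{key2} by transporting the two sequences into the free group factor $M$ through the domination maps, applying the completely Sidon estimate there, and then pulling it back with the tensor bound \eqref{e8'} for decomposable maps.

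First I would unwind the hypotheses. Since $(x^j_n)_{n\ge 1}$ is $c_j$-dominated by $(y_n)_{n\ge 1}$, Definition \ref{dom} furnishes a decomposable map $v_j : M_* \to A_j^*$ with $\|v_j\|_{dec}\le c_j$ and $v_j(y_n)=x^j_n$ for all $n$. Passing to adjoints gives decomposable maps $v_j^*:A_j^{**}\to M$ with $\|v_j^*\|_{dec}=\|v_j\|_{dec}\le c_j$; composing with the canonical inclusion $A_j\hookrightarrow A_j^{**}$ (a $\ast$-homomorphism, hence c.p.) I obtain decomposable maps $w_j:A_j\to M$ with $\|w_j\|_{dec}\le c_j$. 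I then set $z^j_n:=w_j(\psi^j_n)\in M$. These are bounded, $\|z^j_n\|\le \|w_j\|_{cb}\|\psi^j_n\|\le c_jC_j'$, and the decisive point is that they are biorthogonal to $(y_n)$ in the sense of Corollary \ref{key2}: for all $m,n$,
\[
y_m(z^j_n)=\langle v_j^*(\psi^j_n),\,y_m\rangle=\langle \psi^j_n,\,v_j(y_m)\rangle=\langle \psi^j_n,\,x^j_m\rangle=\delta_{nm},
\]
using the biorthogonality of $(x^j_m)$ and $(\psi^j_n)$. Since $y_m(a)=\tau(\varphi_m^* a)$, this is exactly the relation $\tau(z^j_n\varphi_m^*)=\delta_{nm}$ required there. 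Hence Corollary \ref{key2} applies and produces a constant $C''$, depending only on $\sup_n\|z^1_n\|\le c_1C_1'$ and $\sup_n\|z^2_n\|\le c_2C_2'$, with
\[
\Big\|\sum a_n\otimes U_n\Big\|_{\min}\le C''\,\Big\|\sum a_n\otimes z^1_n\otimes z^2_n\Big\|_{M_k\otimes_{\min}(M\otimes_{\max}M)}
\]
for every choice of matricial coefficients $(a_n)$.

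Finally I would transfer this back. By \eqref{e8'} the map $w_1\otimes w_2 : A_1\otimes_{\max}A_2\to M\otimes_{\max}M$ is decomposable with $\|w_1\otimes w_2\|_{dec}\le c_1c_2$, hence completely bounded with $\|w_1\otimes w_2\|_{cb}\le c_1c_2$ by \eqref{e8}; moreover it sends $\psi^1_n\otimes\psi^2_n$ to $z^1_n\otimes z^2_n$. Tensoring with the identity of $M_k$ over the minimal tensor product preserves the c.b.\ norm, so
\[
\Big\|\sum a_n\otimes z^1_n\otimes z^2_n\Big\|_{M_k\otimes_{\min}(M\otimes_{\max}M)}\le c_1c_2\,\Big\|\sum a_n\otimes\psi^1_n\otimes\psi^2_n\Big\|_{M_k\otimes_{\min}(A_1\otimes_{\max}A_2)}.
\]
Chaining the two displays shows that $(\psi^1_n\otimes\psi^2_n)$ is completely Sidon in $A_1\otimes_{\max}A_2$ with constant $C=C''c_1c_2$, which depends only on $C_1',C_2',c_1,c_2$.

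I do not expect a deep obstacle here, since the genuine analytic content already resides in Theorem \ref{key4}/Corollary \ref{key2}. The one step demanding care is the duality bookkeeping in the biorthogonality computation: the domination is given at the level of the preduals/duals $M_*$ and $A_j^*$, whereas the sequences $z^j_n$ that must be fed into Corollary \ref{key2} live back inside $M$ as images of $\psi^j_n$ under the adjoint maps, so one must keep the pairings $\langle A_j^{**},A_j^*\rangle$ and $\langle M,M_*\rangle$ straight in order to confirm $\tau(z^j_n\varphi_m^*)=\delta_{nm}$.
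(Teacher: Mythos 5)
Your proposal is correct and follows essentially the same route as the paper's own proof: transport $(\psi^j_n)$ into $M$ via the restrictions $w_j$ of the adjoints $v_j^*$ to $A_j$, verify biorthogonality of $z^j_n=w_j(\psi^j_n)$ to $(y_n)$, invoke Corollary \ref{key2}, and pull back through $w_1\otimes w_2$ using \eqref{e8} and \eqref{e8'}. The only difference is that you spell out the duality bookkeeping and the constant $C=C''c_1c_2$ explicitly, which the paper leaves as "easy to check."
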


 \begin{proof} 
The key ingredient is Corollary  \ref{key2}.
 Assume $(x_n^j)$ dominated by $(y_n)$.
 Let $v_j: M_* \to A^*_j$ be decomposable such that
 $v_j(y_n)=x_n^j$  ($j=1,2$), with $(x_n^j)$
 biorthogonal to  $(\psi_n^j)$ and $\|v_j\|_{dec}\le c_j$.
 Moreover let $w_j: A_j \to M$ be the restriction of 
 $v_j^{*}: A_j^{**} \to M$   to $A_j$.
 Note that $(v_j^{*} (\psi^j_n))$, or equivalently
  $(w_j  (\psi^j_n))$, is obviously biorthogonal
  to $(y_n)$ for each $j=1,2$.
  Let $z_n^j=w_j  (\psi^j_n)$.
 By Corollary  \ref{key2}
 the sequence $(z_n^1 \otimes z_n^2)$
 is completely Sidon in $M \otimes_{\max} M$.
 But since $w_j\in D(A_j,M)$
 we see by \eqref{e8'}
 that this implies that 
 $(\psi^1_n\otimes \psi^2_n)$ is completely Sidon in $A_1\otimes_{\max} A_2$.
 The assertion on the constants is easy to check by going over the argument.
  \end{proof}

  \begin{rem}[On ``pseudo-free" sequences]\label{ps}
Let us say that a sequence $({\kappa}_n)$ in the predual $N_*$ of a von Neumann algebra $N$
is pseudo-free if $(y_n)$ and $({\kappa}_n)$ are  decomposably equivalent. 
Clearly, we may replace $(y_n)$ by any other pseudo-free sequence
in  what precedes.  
Note that   any sequence of free Haar unitaries,
(or  free Rademacher) or of free semicircular variables
is  pseudo-free.
More generally,
any free sequence $({\kappa}_n)$ with mean 0 in a non-commutative tracial
probability space
 such that   $\inf\|{\kappa}_n\|_1>0$  and $\sup\|{\kappa}_n\|_\infty<\infty$
 is pseudo-free.\\
 Indeed, this can be deduced
from the fact that trace preserving unital c.p. maps
extend to  trace preserving c.p. maps on reduced free products.
The latter fact reduces the problem to the commutative case
(one first checks the result for a single variable
with unital c.p. maps instead of decomposable ones).
\end{rem}

  \section{The union problem}
  
  It is high time to formalize a bit more the central notion
  of this paper.  
 \begin{dfn}\label{d1} Let $(x_n) _{n\ge 1}$ 
 be a sequence in the predual $X$ of a von Neumann algebra.
 Let $(y_n)$ be as before in $M_*$.
 We will say that $(x_n)_{n\ge 1} $  
 is free-gaussian dominated  in $X$ (or  dominated by free-gaussians in $X$)
 if  it is (decomposably) dominated by  the sequence
 $(y_n)$ in $M_*$, or equivalently (see Remark \ref{ps})
 if it is (decomposably) dominated by a free-gaussian sequence
 (or any pseudo-free sequence)
 in $M_*$. Here ``(decomposably) dominated" is meant 
  in the sense of Definition \ref{dom}.\\
 For convenience we define the associated constant using the 
 (unitary) sequence $(y_n)$:
we say that $(x_n)_{n\ge 1} $ is free-gaussian $c$-dominated in $X$
  if it is $c$-dominated by $\{y_n\mid n\ge 1\} \subset M_* $, so that we have
 $T: M_*\to X$ with $\|T\|_{dec}\le c$
 such that $T(y_n)=x_n$.
 \end{dfn}  
   \begin{rem} By classical results (see \cite[p. 126]{Tak})
   for any  von Neumann algebra
   ${\cl M}$,
   there is a c.p.   projection
   (with dec-norm equal to 1) from ${\cl M}^*=({\cl M}_*)^{**}$ to ${\cl M}_*$.
   Therefore the notions of domination
   in ${\cl M}_*$  or in ${\cl M}^*$ are equivalent for sequences sitting
   in  ${\cl M}_*$. 
 \end{rem}
 
Of course we frame the preceding definition 
to emphasize the analogy with the sequences dominated by
i.i.d gaussians in \cite{Pi3}. Note that in the latter,
with independence in place of freeness, 
dominated by gaussians does not imply
dominated by  i.i.d. Haar unitaries,
(indeed gaussians themselves fail this)
but  it holds in the free case
because  free-gaussians are bounded.
Note in passing that  bounded linear maps
between  $L_1$-spaces of commutative (and hence injective) von Neumann algebras
are automatically decomposable.

 \begin{lem}\label{L9} Let ${\cl M},{\cl N} $
 be   von Neumann algebras. Assume 
 that  ${\cl N}$ is equipped
  with   a normal  faithful  tracial state $ \tau' $.
   For $a\in {\cl N}$ we denote by $\hat a \in {\cl N}_*$
  the associated linear form on ${\cl N}$ defined by
  $\hat a(x)=\tau'(a^*x)$.   
 Let $(v_n)_{n\ge 1}$ be unitaries in ${\cl N}$, so that
  $\hat v_n\in N_*  $.
  Let  $(x_n)_{n\ge 1}\in {\cl M}_*$ be
 free-gaussian $c$-dominated.
Then the sequence $(x_n\otimes \hat v_n  )\in  ({\cl M}\ovl\otimes {\cl N})_*$
 is also $c$-dominated by $(y_n)$.
\end{lem}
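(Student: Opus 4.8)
The plan is to produce a $c$-decomposable map $T':M_*\to(\cl M\ovl\otimes\cl N)_*$ with $T'(y_n)=x_n\otimes\hat v_n$ for all $n$, obtained as a composition of two maps: one that encodes the twist by the unitaries $v_n$, and one that encodes the given free-gaussian domination. The essential new ingredient is the twist. Since $\F_\infty$ is free, the assignment $g_n\mapsto\varphi_n\otimes v_n$ extends to a unitary representation $\Phi:\F_\infty\to U(M\ovl\otimes\cl N)$, and $\Phi(t)=\lambda_{\F_\infty}(t)\otimes\pi(t)$ for $t\in\F_\infty$, where $\pi$ is the representation with $\pi(g_n)=v_n$. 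The crucial observation is that $\Phi$ is \emph{trace preserving}: because $\tau(\lambda_{\F_\infty}(t))=0$ for every nontrivial word $t$, one gets $(\tau\otimes\tau')(\Phi(t))=\tau(\lambda_{\F_\infty}(t))\,\tau'(\pi(t))=\delta_{t,e}=\tau(\lambda_{\F_\infty}(t))$, no matter what the (possibly nonzero) values $\tau'(\pi(t))$ are. Hence the unital $*$-homomorphism $\lambda_{\F_\infty}(t)\mapsto\Phi(t)$ defined on the group algebra is trace preserving, and therefore extends (by the standard $L_2/$GNS argument for finite von Neumann algebras) to a normal trace-preserving $*$-homomorphism $\Theta:M\to M\ovl\otimes\cl N$ with $\Theta(\varphi_n)=\varphi_n\otimes v_n$.

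With $\Theta$ in hand, I would invoke Remark \ref{R6}, applied to $(M,\tau)$ and $(M\ovl\otimes\cl N,\tau\otimes\tau')$: the $L_1$-extension $S:M_*\to(M\ovl\otimes\cl N)_*$ of $\Theta$ is completely positive, hence $1$-decomposable. Under the identifications $M_*\cong L_1(\tau)$ and $(M\ovl\otimes\cl N)_*\cong L_1(\tau\otimes\tau')$, the functional $y_n$ corresponds to $\varphi_n^*$ and $y_n\otimes\hat v_n$ to $\varphi_n^*\otimes v_n^*$, so that $S(y_n)=\Theta(\varphi_n^*)=(\varphi_n\otimes v_n)^*=\varphi_n^*\otimes v_n^*$ gives exactly $S(y_n)=y_n\otimes\hat v_n$. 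Next let $T:M_*\to\cl M_*$ be a decomposable map with $\|T\|_{dec}\le c$ and $T(y_n)=x_n$, witnessing the free-gaussian $c$-domination of $(x_n)$. Since $T^*:\cl M\to M$ is normal and decomposable, tensoring it with $id_{\cl N}$ and appealing to \eqref{e8'} produces a $c$-decomposable map $(M\ovl\otimes\cl N)_*\to(\cl M\ovl\otimes\cl N)_*$ (the pre-adjoint of the normal map $T^*\otimes id_{\cl N}$), which sends $y_n\otimes\hat v_n\mapsto x_n\otimes\hat v_n$.

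Finally I would set $T'=(T\ovl\otimes id_{\cl N})\circ S$; then $\|T'\|_{dec}\le\|T\|_{dec}\,\|S\|_{dec}\le c$ and $T'(y_n)=x_n\otimes\hat v_n$, which is precisely the claimed $c$-domination. I expect the main obstacle, and the only genuinely special point, to be the existence and trace-preservation of $\Theta$: everything rests on the free generators $\varphi_n$ having vanishing trace on all nontrivial words, so that multiplying them by \emph{arbitrary} unitaries $v_n$ in the second factor leaves the moments undisturbed and the representation still extends normally to the von Neumann algebra $M$. The remaining points (the passage from $\otimes_{\max}$ to the normal tensor product $\ovl\otimes$ in the bound for $T\ovl\otimes id_{\cl N}$, justified by normality of the cp witnesses, and the $L_1$ identifications of $y_n$ and $\hat v_n$) are routine.
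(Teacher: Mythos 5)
Your proof is correct and takes essentially the same route as the paper: the paper's map $W$ is exactly your $S$, justified there by the one-line observation that $(y_n\otimes \hat v_n)$ and $(y_n)$ have the same $*$-distribution together with Remark \ref{R6}, and the conclusion follows by composing with $T\otimes id_{{\cl N}_*}$ just as you do with $T\ovl\otimes id_{\cl N}$. The only difference is that you spell out the trace computation ($\tau(\lambda_{\F_\infty}(t))=0$ for $t\neq e$ kills all mixed moments) and the GNS extension argument that the paper leaves implicit behind its $*$-distribution claim.
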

 \begin{proof}
Let $T$ be as in Definition \ref{d1} (here $X=   {\cl M}_*$).
Since $(y_n\otimes \hat v_n)$ and $(y_n )$ have the same
$*$-distribution,  
the linear mapping $W$ taking  $y_n $ to $y_n\otimes \hat v_n$
extends to a  c.p. (isometric, unital and trace preserving) map
$W$ from ${M}_*$ to $({ M}\ovl\otimes {\cl N})_*$ (see Remark \ref{R6}).
Then the composition $T_1=(T \otimes id_{{\cl N}_*}) W$ 
  takes
$y_n$ to $x_n \otimes \hat v_n$.
Since $W^*$ is c.p.
and $ (T \otimes id_{{\cl N}_*})^*=T^*\otimes id_{{\cl N}}$,
with dec-norm $\le c$,
$T_1$ is $c$-decomposable.
\end{proof}
\begin{rem}\label{dye}  By the Russo-Dye theorem
the unit ball of ${\cl N}$ is the closed convex hull of its unitaries.
Actually, for any fixed $0<\d<1$
there is an integer $K_\d$
such that any $v\in {\cl N}$ with norm $<\d$
can be written as an average of $K_\d$ unitaries,
this is due to Kadison and Pedersen, see \cite{KaPe} for a proof with $\d=1-2/n$ and $K_\d=n$.
Using this, we can extend Lemma \ref{L9}
to sequences $(v_n) $ in the unit ball of ${\cl N}$.
Indeed, the set of sequences  $(v_n)$ in ${\cl N}$
such that   the  sequence  $(x_n\otimes \hat v_n )$ in $({\cl M}\ovl\otimes {\cl N})_*$
is $c$-dominated by $(y_n)$ is obviously a convex set.
By Lemma \ref{L9} it contains the set of sequences 
of unitaries in ${\cl N}$.
By the Kadison-Pedersen result
it contains any family $(v_n)$ with
 $\sup\|v_n\|<1$.
Therefore if $\|v_n\|\le 1$ for all $n$  
the sequence $(x_n\otimes \hat v_n )\in  ( {\cl M} \ovl\otimes {\cl N})_*$
 is   $c(1+\vp)$-dominated by $(y_n) $ for any $\vp>0$.
\end{rem}
 
\begin{lem}\label{L8}
Let $\Lambda=\{\psi_n\}$  
be a completely Sidon set in ${\cl M}$ with constant $C$.
    There is a biorthogonal system
$(x_n)$ in ${\cl M}^*$
such that,  for any $(\cl N,\tau')$ as before and any $(v_n)$ with
$\sup\nl_n\|v_n\|<c'$,
the sequence
$(x_n\otimes  \hat v_n )\in  ( {\cl M}^{**} \ovl\otimes {\cl N})_*$
 is    free-gaussian $Ccc'$-dominated.
\end{lem}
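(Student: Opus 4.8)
The plan is to feed the biorthogonal system produced by Proposition~\ref{p1} into the tensoring construction of Lemma~\ref{L9} together with its Russo--Dye extension from Remark~\ref{dye}, the only twist being that these are applied to the enveloping von Neumann algebra $\cl M^{**}$ rather than to $\cl M$ itself.

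First I would apply Proposition~\ref{p1}. Since $\Lambda=\{\psi_n\}$ is completely Sidon in $\cl M$ with constant $C$, it furnishes a biorthogonal system $(x_n)$ in $\cl M^*$ that is $C$-dominated by $(y_n)_{n\ge 1}$. The point to record is that $\cl M^*$ is exactly the predual of the enveloping von Neumann algebra $\cl M^{**}$, so that $(x_n)$ is legitimately a free-gaussian $C$-dominated sequence in the predual $(\cl M^{**})_*$ in the sense of Definition~\ref{d1}; this is the system whose existence the lemma asserts.

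Next I would invoke Lemma~\ref{L9} with the pair $(\cl M^{**},\cl N)$ in place of $(\cl M,\cl N)$ — nothing in that lemma or in Remark~\ref{dye} used any feature of $\cl M$ beyond its being a von Neumann algebra carrying the dominated sequence in its predual, so both apply verbatim to $\cl M^{**}$. For unitaries $(v_n)$ in $\cl N$ this already gives that $(x_n\otimes\hat v_n)\in(\cl M^{**}\ovl\otimes\cl N)_*$ is $C$-dominated by $(y_n)$. To accommodate the hypothesis $\sup_n\|v_n\|<c'$ I would rescale: setting $w_n=v_n/c'$ puts each $w_n$ in the open unit ball of $\cl N$, so by the Kadison--Pedersen form of Russo--Dye recorded in Remark~\ref{dye} the sequence $(x_n\otimes\hat w_n)$ is free-gaussian $C(1+\vp)$-dominated for every $\vp>0$; since $\hat v_n=c'\hat w_n$, multiplying the dominating decomposable map by the positive scalar $c'$ shows that $(x_n\otimes\hat v_n)$ is free-gaussian dominated by $(y_n)$ with constant $Cc'(1+\vp)$, which gives the asserted bound once the inessential factor $(1+\vp)$ is absorbed.

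The genuinely delicate point is purely a matter of bookkeeping the ambient algebra: Proposition~\ref{p1} delivers its domination in the full dual $\cl M^*$ rather than in $\cl M_*$, and this is precisely why the conclusion must live in $(\cl M^{**}\ovl\otimes\cl N)_*$. Once one is comfortable reading $\cl M^*$ as the predual of $\cl M^{**}$ and applying the earlier lemmas there, the proof is just the concatenation of the decomposable maps supplied by Proposition~\ref{p1}, Lemma~\ref{L9} and Remark~\ref{dye}, whose dec-norms multiply to the claimed constant.
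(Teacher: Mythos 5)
Your proof is correct and is essentially identical to the paper's own argument: the paper proves Lemma \ref{L8} in one line by citing Proposition \ref{p1} together with Lemma \ref{L9} and the Remark \ref{dye} variant, applied to $(v_n/c')$. Your additional bookkeeping --- reading $\cl M^*$ as the predual of $\cl M^{**}$, and rescaling by $c'$ at the end --- simply makes explicit the steps the paper leaves implicit.
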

 \begin{proof}
  This follows from Proposition \ref{p1}
  and Lemma \ref{L9} with the variant described in Remark \ref{dye}, applied
  to $(v_n/c')$.
\end{proof}
\begin{rem}\label{R9}
At this point it is useful to observe the following:
consider  two sequences in $X$ (a von Neumann algebra predual)
each of which is free-gaussian $C$-dominated,
we claim that their union 
is  free-gaussian $2C$-dominated.
Indeed,  if we have $x_n^j=T_j(y_n)$, $x_n^j\in X$,
with $T_j: M_* \to X$, $\|T_j\|_{dec}\le C$
($j=1,2$).
Let $M \ast M$ be the free product,
and let $E_j$ ($j=1,2$) be the conditional expectation  
onto each copy of $M_*$ in 
$(M \ast M)_*$.
 We can form the operator
$T: (M \ast M)_*\to X$ defined by
$T(a)= T_1E_1 (x)+ T_2E_2 (x)$.
Clearly $T$ is decomposable with $\|T\|_{dec}\le 2C$.
Let $(y^1_n)$ and $(y^2_n)$ denote the sequences
corresponding to $(y_n)$ in each copy of $M_*$ in 
$(M \ast M)_*$. We have
$T(y^j_n)=x^j_n$ for all $n$ and  all $j=1,2$.
But since  the sequence $\{y^1_n\} \cup \{y^2_n\}$
is clearly equivalent to our original sequence $ \{y_n\}$,
this proves the claim.
\end{rem}
 We now come to a non-commutative 
 generalization of our result from \cite{Pi3}.
\begin{thm}\label{t2} Let ${\cl M},{\cl N} $
 be   von Neumann algebras, with $\tau'$ as before.
Suppose $\Lambda_1=\{\psi^1_n\}$ and $\Lambda_2=\{\psi^2_n\}$ 
are two completely Sidon sets in 
a $C^*$-subalgebra $A\subset \cl M$.
Assume  
 there is a representation $\pi: A \to \cl N$
 such that for some $\d>0$ we have
$$\forall \psi \in \Lambda_1\cup\Lambda_2\quad \|\pi(\psi)\|_2\ge \d.$$
We assume  that $\pi(\Lambda_1)$ and $\pi(\Lambda_2)$ are mutually orthogonal in $L_2(\tau')$.
Then the union $ \Lambda_1\cup\Lambda_2$ is completely
$\otimes_{\max}^4$-Sidon.
\end{thm}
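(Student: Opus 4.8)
The plan is to follow the ``pass to $A\otimes_{\max}A$'' strategy sketched in the introduction, the role played there by the group $G^2$ being taken here by the diagonal copy of $\Lambda_1\cup\Lambda_2$ inside $A\otimes_{\max}A$. Write the union as a single sequence $(\psi_n)$. The whole theorem reduces to one claim: the diagonal sequence $(\psi_n\otimes\psi_n)$ admits a biorthogonal system in $(A\otimes_{\max}A)^*$ that is dominated by $(y_n)$. Granting this, I apply Theorem \ref{t1} with $A_1=A_2=A\otimes_{\max}A$ and $\psi^1_n=\psi^2_n=\psi_n\otimes\psi_n$; its conclusion is that $(\psi_n\otimes\psi_n)\otimes(\psi_n\otimes\psi_n)=\psi_n^{\otimes 4}$ is completely Sidon in $(A\otimes_{\max}A)\otimes_{\max}(A\otimes_{\max}A)=A\otimes_{\max}A\otimes_{\max}A\otimes_{\max}A$ (using associativity of $\otimes_{\max}$), which is exactly the assertion that $\Lambda_1\cup\Lambda_2$ is completely $\otimes^4_{\max}$-Sidon. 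This is also where the exponent $4$ rather than $2$ is forced: one doubling is already spent to house the separating tag inside $A\otimes_{\max}A$, and Theorem \ref{t1} squares once more.

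To build the biorthogonal system I treat $\Lambda_1$ and $\Lambda_2$ separately. Fix $j\in\{1,2\}$, set $C'_j=\sup_n\|\psi^j_n\|$, and note that $\Lambda_j$, being completely Sidon (the $k=1$ notion) in $A\subset\cl M$, is completely Sidon in $\cl M$ with the same constant. I apply Lemma \ref{L8} to $\Lambda_j$ with the tag $v_n=\pi(\psi^j_n)$, which is uniformly bounded since $\|\pi(\psi^j_n)\|\le C'_j$. This produces a system $(x^j_n)\subset\cl M^*$ biorthogonal to $\Lambda_j$ together with the fact that $(x^j_n\otimes\widehat{\pi(\psi^j_n)})\in(\cl M^{**}\ovl\otimes\cl N)_*$ is free-gaussian dominated with a constant controlled by $C'_j$ and the Sidon constant. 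I then restrict the first leg to $A$, apply the map $(\mathrm{id}_A\otimes\pi)^*$, which is decomposable with dec-norm $\le 1$ because $\pi$ is a $*$-representation (hence c.p.) by \eqref{e8'}, and finally rescale the $n$-th term by $\lambda_n=\|\pi(\psi^j_n)\|_2^{-2}$. The outcome is $\Xi^j_n=x^j_n\otimes w^j_n\in(A\otimes_{\max}A)^*$, where $w^j_n(a)=\lambda_n\,\tau'(\pi(\psi^j_n)^*\pi(a))$. Since $\delta\le\|\pi(\psi^j_n)\|_2\le C'_j$, the scalars $\lambda_n$ lie in $[(C'_j)^{-2},\delta^{-2}]$, so the rescaling preserves domination: $(\lambda_n y_n)$ is pseudo-free by Remark \ref{ps}, hence decomposably equivalent to $(y_n)$, and composition with decomposable maps keeps the sequence dominated.

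The biorthogonality of $\{\Xi^1_n\}\cup\{\Xi^2_n\}$ against $\{\psi_m\otimes\psi_m\}$ is the heart of the argument and is exactly where the hypotheses on $\pi$ are used. The two legs of $\Xi^j_n$ do complementary jobs: $x^j_n$ separates \emph{within} $\Lambda_j$ (as $x^j_n(\psi^j_m)=\delta_{nm}$), while $w^j_n$ separates \emph{across} the union, because the orthogonality of $\pi(\Lambda_1)$ and $\pi(\Lambda_2)$ forces $w^1_n(\psi^2_m)=\lambda_n\tau'(\pi(\psi^1_n)^*\pi(\psi^2_m))=0$ and symmetrically. Evaluating on the diagonal, $\Xi^1_n(\psi^1_m\otimes\psi^1_m)=x^1_n(\psi^1_m)\,w^1_n(\psi^1_m)=\delta_{nm}$ (the normalization makes $w^1_n(\psi^1_n)=1$), whereas $\Xi^1_n(\psi^2_m\otimes\psi^2_m)=x^1_n(\psi^2_m)\cdot 0=0$, the uncontrolled numbers $x^1_n(\psi^2_m)$ being harmlessly annihilated by the second leg. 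This is precisely the device that circumvents the disjoint-union obstruction noted in the introduction, that the plain union of systems biorthogonal to $\Lambda_1$ and to $\Lambda_2$ need not be biorthogonal to $\Lambda_1\cup\Lambda_2$. Each of $(\Xi^1_n)$ and $(\Xi^2_n)$ is dominated by $(y_n)$ by the previous paragraph, so Remark \ref{R9} gives that their union is dominated by $(y_n)$ as well, and Theorem \ref{t1} finishes the proof.

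The main obstacle is this cross-term biorthogonality, which is exactly what the representation $\pi$ and the $L_2(\tau')$-orthogonality of $\pi(\Lambda_1),\pi(\Lambda_2)$ are introduced to defeat. The remaining work is bookkeeping: identifying the correct space in which $(x^j_n\otimes\widehat{\pi(\psi^j_n)})$ lives and is dominated (namely $(\cl M^{**}\ovl\otimes\cl N)_*$, then restricted to a functional on $A\otimes_{\max}\cl N$), checking that $(\mathrm{id}_A\otimes\pi)^*$ and the diagonal rescaling are decomposable with uniformly bounded dec-norm, and tracking the final constant through Lemma \ref{L8}, Remark \ref{R9} and Theorem \ref{t1}.
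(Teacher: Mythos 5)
Your proposal is correct and takes essentially the same route as the paper: produce tagged systems $x^j_n\otimes\hat{\pi(\psi^j_n)}$ via Lemma \ref{L8}, use the orthogonality of $\pi(\Lambda_1)$ and $\pi(\Lambda_2)$ precisely to kill the cross terms in the biorthogonality, take the union via Remark \ref{R9}, and conclude with Theorem \ref{t1}, the exponent $4$ arising for the same reason. The only differences are cosmetic: the paper normalizes $\|\pi(\psi)\|_2=1$ by homogeneity instead of your $\lambda_n$-rescaling through Remark \ref{ps}, and it applies Theorem \ref{t1} upstairs in ${\cl M}^{**}\ovl\otimes{\cl N}$ (getting that $\{\psi_n^j\otimes\pi(\psi_n^j)\}$ is completely $\otimes_{\max}^2$-Sidon there) and only afterwards removes $[id:{\cl M}\to{\cl M}^{**}]\otimes\pi$ using \eqref{e8'}, whereas you pull the biorthogonal system down to $(A\otimes_{\max}A)^*$ first.
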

\begin{proof}  
We first observe that
since $\pi$ extends to a (normal) representation
from $A^{**}$ to $\cl N$, we may assume without loss
of generality that $\cl M=A^{**}$ and that $\pi$ is extended to $\cl M$.
Note that by our assumption
$\Lambda_1\cup\Lambda_2$ is bounded in $\cl M$.
By a simple homogeneity
argument, we may assume without loss of generality that
$$\forall \psi \in \Lambda_1\cup\Lambda_2\quad \|\pi(\psi)\|_2=1.$$
  By Lemma \ref{L8} 
 there are $x_n^1\in \cl M^*$
 biorthogonal to $(\psi^1_n)$ 
such that  $(x_n^1\otimes  \hat{\pi(\psi^1_n)})$
is free-gaussian dominated  in $( {\cl M}^{**} \ovl\otimes {\cl N})_*$.
Note that the latter is also  biorthogonal to $(\psi^1_n\otimes \hat{\pi(\psi^1_n)})$.
Similarly
there are $x_n^2\in \cl M^*$ such that the same holds for
   $(x_n^2\otimes  \hat{\pi(\psi^2_n)})$.
By Remark \ref{R9}, the union
 $\{x_n^1\otimes\hat{\pi(\psi^1_n)}\}\cup \{x_n^2\otimes \hat{\pi(\psi^2_n)}\}
 \subset ( {\cl M}^{**} \ovl\otimes {\cl N})_*$
 is free-gaussian dominated.
 But now the latter system
 is biorthogonal to 
 $\{\psi_n^1\otimes {\pi(\psi^1_n)} \}\cup \{\psi_n^2\otimes  {\pi(\psi^2_n)}\}\subset {\cl M}^{**}\ovl\otimes {\cl N}$.
 Indeed, this holds because,
 by our orthogonality assumption, $\hat{\pi\psi^1_n}(\pi \psi_n^2)=\hat{\pi\psi^2_n}( \pi\psi_n^1)=0$ for \emph{all}
 $m,n$. By Theorem \ref{t1}
we conclude that
 the latter system,
 which can be described as $([id:{\cl M}\to {\cl M} ^{**}] \otimes \pi)(\{\psi_n^1\otimes \psi_n^1\}\cup \{\psi_n^2\otimes \psi_n^2\})$,
  is completely
$\otimes_{\max}^2$-Sidon.
 Using \eqref{e8'} to remove $$[id:{\cl M}\to {\cl M} ^{**}] \otimes \pi: \cl M \otimes_{\max}  \cl M \to {\cl M }^{**}  \otimes_{\min}  \cl N \subset {\cl M} ^{**} \ovl\otimes {\cl N},$$ we see that this implies that
  $ \Lambda_1\cup\Lambda_2$ itself  is completely
$\otimes_{\max}^4$-Sidon.
\end{proof}
\begin{rem} As the reader may have noticed the preceding proof actually
shows that $\{\psi\otimes \psi  \otimes \psi \otimes \psi \mid \psi \in  \Lambda_1\cup\Lambda_2 \}$ is completely Sidon
in $(A\otimes_{\min} A) \otimes_{\max} (A\otimes_{\min} A)$.
\end{rem}
Let $G$ be any discrete group.
We say that subset $\Lambda\subset G$ is completely Sidon
if the 
set $\{U_G(t)\mid t\in \Lambda\}$ is   completely Sidon in $C^*(G)$.
In this setting we recover our recent generalization    \cite{Pifz} of Drury's classical commutative result. 
\begin{cor}\label{coru} Let $G$ be any discrete group.
 The union of two completely Sidon subsets of $G$
is completely Sidon.
\end{cor}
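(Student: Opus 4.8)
The plan is to feed the group case into Theorem \ref{t2} and then use a diagonal $*$-homomorphism, special to group $C^*$-algebras, to collapse the resulting $\otimes_{\max}^4$-Sidon property back to ordinary complete Sidonicity. First I would remove any overlap: a subset of a completely Sidon set is again completely Sidon with the same constant (one only restricts the admissible matricial coefficients $(a_n)$ in the defining inequality), so replacing $\Lambda_2$ by $\Lambda_2\setminus\Lambda_1$ leaves the union unchanged and lets me assume $\Lambda_1\cap\Lambda_2=\emptyset$.

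Next I would apply Theorem \ref{t2} with $A=C^*(G)$, $\cl M = A^{**}$, $\cl N=M_G$ the group von Neumann algebra carrying its canonical normal faithful tracial state $\tau'$, and $\pi\colon C^*(G)\to M_G$ the $*$-homomorphism induced by the left regular representation, $\pi(U_G(t))=\lambda_G(t)$ (writing $\psi=U_G(t)$ for $t\in\Lambda_1\cup\Lambda_2$). The two hypotheses of Theorem \ref{t2} are then immediate from the trace: for every $t$ one has $\|\lambda_G(t)\|_2^2=\tau'(\lambda_G(t)^*\lambda_G(t))=\tau'(1)=1$, so $\|\pi(\psi)\|_2\ge\delta$ holds with $\delta=1$; and for $s\ne t$ one has $\langle\lambda_G(s),\lambda_G(t)\rangle_{L_2(\tau')}=\tau'(\lambda_G(s^{-1}t))=0$, so the disjointness of $\Lambda_1$ and $\Lambda_2$ is exactly the mutual orthogonality of $\pi(\Lambda_1)$ and $\pi(\Lambda_2)$ in $L_2(\tau')$. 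Theorem \ref{t2} therefore yields that $\Lambda_1\cup\Lambda_2$ is completely $\otimes_{\max}^4$-Sidon in $C^*(G)$.

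It remains to pass from $\otimes_{\max}^4$-Sidon to genuinely completely Sidon, and here I would exploit the feature peculiar to $C^*(G)$ already visible in Example (ii). Since $t\mapsto U_G(t)\otimes U_G(t)\otimes U_G(t)\otimes U_G(t)$ is a unitary representation of $G$ into $C^*(G)\otimes_{\max}\cdots\otimes_{\max}C^*(G)$ ($4$ times), the universal property of $C^*(G)$ produces a $*$-homomorphism $\Delta\colon C^*(G)\to C^*(G)^{\otimes_{\max} 4}$ with $\Delta(U_G(t))=U_G(t)^{\otimes 4}$. For any $C^*$-algebra $B$ the map $id_B\otimes\Delta$ is completely positive, hence contractive, from $B\otimes_{\min}C^*(G)$ to $B\otimes_{\min}C^*(G)^{\otimes_{\max} 4}$, so that $\|\sum a_n\otimes U_G(t_n)^{\otimes 4}\|_{\min}\le\|\sum a_n\otimes U_G(t_n)\|_{\min}$. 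Combining this with the $\otimes_{\max}^4$-Sidon estimate $\|\sum a_n\otimes U_n\|_{\min}\le C\|\sum a_n\otimes U_G(t_n)^{\otimes 4}\|_{\min}$ gives $\|\sum a_n\otimes U_n\|_{\min}\le C\|\sum a_n\otimes U_G(t_n)\|_{\min}$, which is precisely complete Sidonicity of $\Lambda_1\cup\Lambda_2$.

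The conceptual crux, rather than a technical obstacle, is this last diagonal homomorphism: it is the group-theoretic mechanism that recovers the case $k=1$ from the $k=4$ conclusion that Theorem \ref{t2} delivers for a general $C^*$-algebra $A$. Everything else is supplied for free, since the regular representation automatically realises distinct group elements as orthonormal vectors of $L_2(\tau')$, so no separate construction of an orthogonal, norm-bounded representation $\pi$ is needed.
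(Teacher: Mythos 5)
Your proposal is correct and follows essentially the same route as the paper: apply Theorem \ref{t2} (with $\pi=\lambda_G$ into the group von Neumann algebra, where orthogonality of distinct group elements under the trace is automatic), then collapse the resulting $\otimes_{\max}^4$-Sidon property to complete Sidonicity via the diagonal $*$-homomorphism $U_G(t)\mapsto U_G(t)^{\otimes 4}$ into $C^*(G)^{\otimes_{\max}4}$, which is exactly the paper's use of Proposition \ref{R3}(ii) with the representation $U_G\otimes U_G\otimes U_G\otimes U_G$. Your additional steps (reducing to disjoint sets and explicitly verifying the hypotheses of Theorem \ref{t2}) merely make explicit details the paper leaves implicit.
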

\begin{proof} We claim that any completely $\otimes^4_{\max}$-Sidon
set in $G$ is completely Sidon.
With this claim, the Corollary follows from
Theorem \ref{t2} applied with $A_1=A_2=C^*(G)$.
To check this claim, we use (ii) in Proposition \ref{R3}.
Let $U_G$ be the universal representation on $G$.
Assuming $\Lambda=\{t_n\}$. Let $\psi_n=U_G(t_n)$.
For any unitary representation
 $\pi$ on $G$ with values in a unital $C^*$ algebra $A_\pi$,
 with the same notation as in (ii), we have obviously
 (since $\pi$ extends completely contractively to $C^*(G)$)
$$\|\sum a_n\otimes \pi(t_n) \|_{M_k(A_\pi)}\le  \|\sum a_n\otimes U_G(t_n) \|_{M_k(C^*(G))} .$$
Applying this with $\pi=U_G\otimes U_G\otimes U_G\otimes U_G$,
and $A_\pi=C^*(G)\otimes_{\max} C^*(G) \otimes_{\max}C^*(G)\otimes_{\max} C^*(G)$,
the claim becomes immediate.
 \end{proof}
We refer to \cite{Pifz} for  several complementary results, in particular for ``completely Sidon" versions 
of the interpolation and Fatou-Zygmund properties of Sidon sets, and for a discussion
of the  closed span of a completely Sidon
 set  in the \emph{reduced} $C^*$-algebra of $G$. 

\begin{rem}\label{Rf} By analogy with the commutative case,
we propose the following definition: 
Let 
$(y_n)$ be a free-gaussian (i.e. free semicircular) sequence  in $M_*$. We say that $(x_n)$
in $A^*$ is free-subgaussian if there is $C$ such that
for any $k$  the union of the sequences $\{x_n^1\},\cdots,\{x_n^k\}$
in $(A^{\ast k})^*$
is $C$-dominated by 
$(y_n)$.
Here $A{\ast} \cdots \ast A$ is the (full) free product of $k$ copies of $A$,
and $x_n^1,\cdots,x_n^k$ are the copies of $x_n$ in each of the 
free factors of $A\ast \cdots \ast A$.
Note that with the same notation
the sequence  ${y_n^1,\cdots,y_n^k}$
in $(M{\ast} \cdots \ast M )^*$ has the same distribution as
the original sequence  $(y_n)$.\\
In the commutative case, when $(x_n)$ lies in $L_1$
over some probability space and freeness is replaced by independence, this is the same as subgaussian
in the usual sense,
see   \cite[Prop. 2.10]{Pi3} for details.
See \cite{Pis} for a survey on subgaussian systems.
\end{rem}

   \medskip
   
   \medskip
   
\n\textit{Acknowledgement.} Thanks are due  to  Marek Bo\.zejko, 
Simeng Wang 
and Mateusz Wasilewski   for useful communications.

  \end{document}